\theoremstyle{plain}
\newtheorem{theorem}{Theorem}
\newtheorem{lemma}[theorem]{Lemma}
\theoremstyle{definition}
\newtheorem{example}[theorem]{Example}
\theoremstyle{remark}
\newtheorem{remark}[theorem]{Remark}
\title{Approximations for the number of maxima and near-maxima in independent data} \author{Fraser Daly\footnote{Department of Actuarial Mathematics and Statistics, and the Maxwell Institute for Mathematical Sciences, Heriot--Watt University, Edinburgh EH14 4AS, UK.  E-mail: F.Daly@hw.ac.uk}} \date{\today}
\begin{document}

\maketitle

\noindent{\bf Abstract} 
In the setting where we have $n$ independent observations of a random variable $X$, we derive explicit error bounds in total variation distance when approximating the number of observations equal to the maximum of the sample (in the case where $X$ is discrete) or the number of observations within a given distance of an order statistic of the sample (in the case where $X$ is absolutely continuous). The logarithmic and Poisson distributions are used as approximations in the discrete case, with proofs which include the development of Stein's method for a logarithmic target distribution. In the absolutely continuous case our approximations are by the negative binomial distribution, and are established by considering negative binomial approximation for mixed binomials. The cases where $X$ is geometric, Gumbel and uniform are used as illustrative examples.  
\vspace{12pt}

\noindent{\bf Key words and phrases:} logarithmic distribution; negative binomial distribution; Poisson distribution;  order statistics; Stein's method; size-biasing

\vspace{12pt}

\noindent{\bf MSC 2020 subject classification:} 62E17; 60E05; 60E15 

\section{Introduction and main results}\label{sec:intro}

Let $X,X_1,X_2,\ldots$ be independent and identically distributed (i$.$i$.$d$.$) random variables, and denote by $M_n=\max\{X_1,\ldots,X_n\}$ the maximum of $n$ of these random variables. In the discrete setting where the $X_i$ take positive integer values, our aim is to establish explicit error bounds in approximations for the number of these $X_i$ that are equal to their maximum. We denote this quantity by $K_n$. That is,
\begin{equation}\label{eq:kdef}
K_n=|\{i\in\{1,\ldots,n\}:X_i=M_n\}|\,.
\end{equation}
Eisenberg \cite{eisenberg09} discusses applications of this statistic to sporting competitions, modelling the following scenario: assuming a set of players of equal ability, how many are tied for the record performance? Similarly, $K_n$ finds applications in the reliability of systems composed of $n$ independent components with identically distributed lifetimes, and in the output of randomised selection algorithms (see \cite{bruss03} for a brief discussion of this final application).      

Here we are motivated by the work of Brands \emph{et al$.$} \cite{brands94}, who show that, while $K_n$ does not in general converge to a distributional limit as $n\to\infty$, there are nevertheless good approximations to the distribution of $K_n$. Motivated by a coin-tossing problem of R\"ade \cite{rade91}, Brands \emph{et al$.$} pay particular attention to the case where $X$ is geometrically distributed with parameter $p$, and observe that if $p$ is constant (i.e., independent of $n$) the distribution of $K_n$ is close to logarithmic, while for other choices of $p$ it may be close to Poisson. Our first principal aim here is to complement these observations with explicit error bounds in total variation distance for the approximation of the distribution of $K_n$ by either a logarithmic or Poisson distribution, and thus obtain simple approximations relevant to the applications discussed above together with quantitative error bounds. The number of maxima in the case where $X$ is geometrically distributed has also been paid particular attention by Kirschenhofer and Prodinger \cite{kirschenhofer96} and Olofsson \cite{olofsson99}, and we refer the interested reader to the work of Eisenberg \cite{eisenberg09} and Bruss and Gr\"ubel \cite{bruss03} and references therein for a discussion of related results in a more general discrete setting. We will treat the general case here, but use the geometric distribution as a motivating and illustrative example.

In the setting where $X$ is absolutely continuous, the situation is somewhat different. Here we consider instead the number of observations within some threshold $a$ from the maximum. Pakes and Steutel \cite{pakes97} establish a limiting (shifted) mixed Poisson distribution for this quantity as $n\to\infty$, and note that this mixed Poisson simplifies to a geometric distribution in the case where $X$ is in the maximum domain of attraction of a Gumbel distribution. This geometric limit for observations close to the maximum is generalised to a negative binomial limit for observations close to one of the order statistics of our data by Pakes and Li \cite{pakes98}. Our second principal aim in this note is to provide explicit error bounds (again in total variation distance) in the negative binomial approximation of the number of data points close to the order statistics of our sample in this absolutely continuous setting, relevant to applications analogous to those of $K_n$ above. We will formulate this problem more precisely later in this section, after first stating and illustrating our main results in the discrete case. 

All of our approximations and error bounds will be given in the total variation distance, defined for random variables $K$ and $L$ supported on $\mathbb{Z}^+=\{0,1,\ldots\}$ by
\[
d_{\text{TV}}(K,L)=\sup_{E\subseteq\mathbb{Z}^+}|\mathbb{P}(K\in E)-\mathbb{P}(L\in E)|
=\frac{1}{2}\sum_{j\in\mathbb{Z}^+}|\mathbb{P}(K=j)-\mathbb{P}(L=j)|\,.
\]

We use the remainder of this section to present our main results in the discrete setting (Theorems \ref{thm:discrete} and \ref{thm:poisson} below) and absolutely continuous case (in Theorem \ref{thm:continuous}), and to further provide examples to illustrate these results. Proofs of these main results are deferred to Sections \ref{sec:logarithmic}--\ref{sec:poisson}. These proofs make use of Stein's method, adapted here for use with a logarithmic target distribution for the first time and also used to establish new results in the negative binomial approximation of mixed binomial distributions. We will also make use of well-known results on Poisson approximation. Some concluding remarks are given in Section \ref{sec:conc}.   

\subsection{The discrete case}

We begin in the discrete setting, where $X$ takes positive integer values, with mass function $p(j)=\mathbb{P}(X=j)$ and distribution function $F(j)=\mathbb{P}(X\leq j)$ for $j=1,2,\ldots$.  We recall that Lemma 2.1 of Brands \emph{et al$.$} \cite{brands94} gives the probability mass function of the random variable $K_n$ defined in \eqref{eq:kdef}:
\begin{equation}\label{eq:kmass}
\mathbb{P}(K_n=k)=\binom{n}{k}\sum_{j=1}^\infty p(j)^kF(j-1)^{n-k}\,,
\end{equation}
for $k=1,\ldots,n$. 

Although the mass function \eqref{eq:kmass} appears relatively simple, it includes a periodic component that adds complexity to its asymptotics; see Proposition 1 of \cite{kirschenhofer96} for asymptotic results in the case where $X$ has a geometric distribution. This result motivates us to derive approximations in total variation distance for $K_n$, quantifying the error in simple approximations for expectations of bounded functions of $K_n$, not only the mass function and moments as in \cite{kirschenhofer96}.

From \eqref{eq:kmass}, expressions for the factorial moments of $K_n$ follow easily. In particular, letting $(k)_\ell=k(k-1)\cdots(k-\ell+1)$ denote the falling factorial, we have that
\begin{equation}\label{eq:kmean}
\mathbb{E}[(K_n)_\ell]=(n)_\ell\sum_{j=1}^\infty p(j)^\ell F(j)^{n-\ell}\,,
\end{equation}
for $\ell=1,\ldots,n$. 

We will consider the approximation of $K_n$ by both a logarithmic distribution and a Poisson distribution, where we recall that $L\sim\text{L}(\alpha)$ has a logarithmic distribution with parameter $\alpha\in(0,1)$ if
\[
\mathbb{P}(L=k)=\frac{-\alpha^k}{k\log(1-\alpha)}\,,
\]
for $k=1,2,\ldots$, and $Y\sim\text{Pois}(\lambda)$ has a Poisson distribution with parameter $\lambda>0$ if
\[
\mathbb{P}(Y=k)=\frac{e^{-\lambda}\lambda^k}{k!}\,,
\]
for $k=0,1,\ldots$. We begin by considering logarithmic approximation, in which case our main result is the following.
\begin{theorem}\label{thm:discrete}
Let $X_1,\ldots,X_n$ be positive, integer-valued, i$.$i$.$d$.$ random variables with maximum $M_n$. Let $K_n$ be defined as in \eqref{eq:kdef}, with probability mass function as in \eqref{eq:kmass}.
\begin{enumerate}
\item[(a)] Let $L\sim\text{L}(\alpha)$, where $1-\alpha=\frac{\mathbb{P}(K_n=1)}{\mathbb{E}[K_n]}$. Then,
\[
d_\text{TV}(K_n,L)\leq-2n\log(1-\alpha)\sum_{j=1}^\infty p(j)\left(F(j)^{n-1}-\frac{(1-\alpha)(n-1)}{\alpha}p(j)F(j-1)^{n-2}\right)\,.
\]
\item[(b)] Let $L\sim\text{L}(\beta)$, where $1-\beta=\frac{\mathbb{E}[K_n]}{\mathbb{E}[K_n^2]}$. Then,
\[
d_\text{TV}(K_n,L)\leq-2(1+\beta)\log(1-\beta)\mathbb{E}[K_n^2]\left(\beta+(1-\beta)\left[\frac{\mathbb{E}[(K_n)_3]}{\mathbb{E}[(K_n)_2]}-\frac{(n-3)\mathbb{E}[(K_n)_2]}{(n-1)\mathbb{E}[K_n]}\right]\right)\,.
\]
\end{enumerate}
\end{theorem}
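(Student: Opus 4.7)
The proof proceeds via Stein's method with a logarithmic target distribution (applied here for the first time), combined with size-biasing. First, I would set up Stein's method for $\text{L}(\alpha)$. From the recursion $k p(k) = \alpha(k-1) p(k-1)$ for $k \geq 2$ satisfied by the logarithmic mass function, one obtains the characterisation $L \sim \text{L}(\alpha)$ if and only if $\mathbb{E}[\alpha L f(L+1) - L f(L)\mathbf{1}_{\{L \geq 2\}}] = 0$ for every bounded $f$. Taking $\mathcal{A}f(k) = \alpha k f(k+1) - k f(k)\mathbf{1}_{\{k \geq 2\}}$ as the Stein operator, the Stein equation $\mathcal{A}f_h(k) = h(k) - \mathbb{E}[h(L)]$ can be solved explicitly, and I would derive uniform bounds on $f_h$ (and on $k f_h(k)$) of order $-\log(1-\alpha)$ for $h = \mathbf{1}_A$; these account for the $-2\log(1-\alpha)$ prefactors appearing in both parts of the theorem.

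Next, I would rewrite the Stein expression for $K_n$. Size-biasing gives
\[
\mathbb{E}[\mathcal{A}f(K_n)] = \mathbb{E}[K_n]\,\mathbb{E}[\alpha f(K_n^s + 1) - f(K_n^s)\mathbf{1}_{\{K_n^s \geq 2\}}],
\]
and rearranging \eqref{eq:kmass} yields the mixed-binomial representation $K_n^s \stackrel{d}{=} 1 + Y_J$, with $\mathbb{P}(J=j) = np(j)F(j)^{n-1}/\mathbb{E}[K_n]$ and $Y_j \mid J=j \sim \text{Bin}(n-1, p(j)/F(j))$. For part (a), the choice $1-\alpha = \mathbb{P}(K_n = 1)/\mathbb{E}[K_n]$ is precisely the matching $\mathbb{P}(K_n^s = 1) = \mathbb{P}(L^s = 1) = 1-\alpha$ (since $L^s$ is geometric on $\{1, 2, \ldots\}$ with parameter $1-\alpha$), under which the $f(1)$ boundary term cancels in
\[
\mathbb{E}[\alpha f(K_n^s + 1) - f(K_n^s)\mathbf{1}_{\{K_n^s \geq 2\}}] = \mathbb{E}[\alpha f(K_n^s + 1) - f(K_n^s)] + (1-\alpha)f(1).
\]
Conditioning on $J$ and applying standard binomial identities then produces the stated bound, after recognising $\mathbb{E}[K_n] = n\sum_j p(j) F(j)^{n-1}$ and $n(n-1)\sum_j p(j)^2 F(j-1)^{n-2} = 2\mathbb{P}(K_n = 2)$ from \eqref{eq:kmass}.

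For part (b) the same framework applies, but with $\beta$ chosen so that $\mathbb{E}[K_n^s] = \mathbb{E}[L^s] = (1-\beta)^{-1}$ --- equivalently, $1-\beta = \mathbb{E}[K_n]/\mathbb{E}[K_n^2]$. This first-moment matching of the size-biased distributions makes the dominant term in a Taylor-type expansion of the Stein expression vanish; the residual, re-expressed through the factorial moments of $K_n$ using \eqref{eq:kmean} (which naturally produces terms of the form $\mathbb{E}[(K_n)_\ell]/\mathbb{E}[(K_n)_{\ell-1}]$), yields the ratios $\mathbb{E}[(K_n)_3]/\mathbb{E}[(K_n)_2]$ and $(n-3)\mathbb{E}[(K_n)_2]/((n-1)\mathbb{E}[K_n])$ in the stated bound. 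The main obstacle is the Stein step itself: identifying the correct Stein operator (with the indicator $\mathbf{1}_{\{k \geq 2\}}$ reflecting the absence of mass at $0$ in $\text{L}(\alpha)$) and obtaining uniform bounds on its solution of the sharp order $-\log(1-\alpha)$; once that is done, the reduction via \eqref{eq:kmass}, \eqref{eq:kmean} and size-biasing is essentially algebraic.
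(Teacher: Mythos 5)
Your setup is the right one and matches the paper's framework: your Stein operator $\mathcal{A}f(k)=\alpha k f(k+1)-kf(k)\mathbf{1}\{k\geq2\}$ is equivalent to the paper's Stein equation \eqref{eq:LogSteinEq}, the size-biasing identity and the mixed-binomial representation of $K_n^\star-1$ are exactly those used in the paper, and your reading of the parameter choices ($1-\alpha=\mathbb{P}(K_n^\star=1)=\mathbb{P}(L^\star=1)$ for (a), first-moment matching of the size-biased laws for (b)) is correct. The genuine gap is that in both parts the step that actually produces the stated bounds is only asserted. For part (a), ``conditioning on $J$ and applying standard binomial identities'' is not how the correction term arises, and you give no computation that would yield it: in the paper the subtracted term $\frac{(1-\alpha)(n-1)}{\alpha}p(j)F(j-1)^{n-2}$, equivalently $\frac{2(1-\alpha)}{\alpha}\mathbb{P}(K_n=2)$, comes from a coupling argument, not binomial algebra. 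One bounds $|\mathbb{E}[f_h(\widetilde{K}_n^\star-1)-f_h(I_\alpha K_n^\star)]|$ by $-\frac{2\log(1-\alpha)}{\alpha}\mathbb{P}(\widetilde{K}_n^\star-1\neq I_\alpha K_n^\star)$ using $\|f_h\|_\infty\leq-\log(1-\alpha)/\alpha$, constructs $I_\alpha=\mathbf{1}\{\widetilde{K}_n^\star>1\}$ from an independent copy $\widetilde{K}_n^\star$, and lower-bounds the matching probability by the contribution of the event $\{\widetilde{K}_n^\star=2,\,K_n^\star=1\}$. Without such a coupling (or an equivalent explicit estimate of the signed measure), a direct bound on $\alpha\mathbb{E}[f(K_n^\star+1)]-\mathbb{E}[f(K_n^\star)]+(1-\alpha)f(1)$ gives only the leading term $-2\log(1-\alpha)\mathbb{E}[K_n]$ and misses the negative correction; note also that the paper's proof of (a) does not use the mixed-binomial representation at all. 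Your claimed uniform bound of order $-\log(1-\alpha)$ on $kf_h(k)$ is neither available in that form nor what is needed (the usable bound is $|f_h(k)|\leq-\log(1-\alpha)/\alpha$; a uniform bound on $kf_h(k)$ is only of order $(1-\alpha)^{-1}$).

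For part (b) the gap is larger. The paper does not run a ``Taylor-type expansion'' of the logarithmic Stein expression: it first proves a transfer lemma, $d_\text{TV}(K_n,L)\leq-\frac{2(1+\beta)\log(1-\beta)}{\beta}\mathbb{E}[K_n]\,d_\text{TV}(K_n^\star,G)$ with $G\sim\text{Geom}(1-\beta)$, which is where the $(1+\beta)$ factor in the statement comes from (via $G-1\stackrel{d}{=}I_\beta G$ and a triangle inequality), and then bounds $d_\text{TV}(K_n^\star,G)$ by a separate, genuinely nontrivial result on negative binomial approximation of mixed binomials (Theorem \ref{thm:negbin}), proved with the Brown--Phillips Stein method, a size-bias coupling involving $Q^\star$, and stochastic-ordering comparisons; the bracket $\beta+(1-\beta)\bigl[(n-2)\mathbb{E}[Q^2]/\mathbb{E}[Q]-(n-3)\mathbb{E}[Q]\bigr]$ is $\mathbb{E}|W^\prime-W^\star|$ for that coupling, and only afterwards is it converted to the factorial-moment ratios via \eqref{eq:QMoments}. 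Your sketch asserts that moment matching ``makes the dominant term vanish'' and that the residual ``yields the ratios,'' but the leading term $\beta$ in the bracket does not vanish, no mechanism is given for the $(1+\beta)$ factor or the specific coefficients $(n-2)$ and $(n-3)$, and the coupling/ordering argument that is the real content of the proof is absent. So the reduction you call ``essentially algebraic'' is precisely the part that still has to be proved.
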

The proof of Theorem \ref{thm:discrete}(a) is given in Section \ref{sec:logarithmic} below, while part (b) is proved in Section \ref{sec:negbin}. Note that we typically expect the upper bound of part (a) to be superior to that of part (b); see Remark \ref{rem:thm1} below. We nevertheless include part (b) in the statement of Theorem \ref{thm:discrete}, since there may be examples where it is useful and since its proof uses tools very similar to those needed for Theorems \ref{thm:poisson} and \ref{thm:continuous} below.

Before moving on to state our Poisson approximation results, we illustrate the upper bounds of Theorem \ref{thm:discrete} in our motivating example of geometrically distributed data.
\begin{example}\label{eg:geom1}
We let $X\sim\text{Geom}(p)$ have a geometric distribution with parameter $p\in(0,1)$, that is, $\mathbb{P}(X=j)=p(1-p)^{j-1}$ for $j=1,2,\ldots$. In this example we can check numerically that the upper bound of part (a) of Theorem \ref{thm:discrete} is superior to that of part (b), so we focus on the former result here. In the geometric setting we can easily check that the parameter $\alpha$ of our approximating logarithmic distribution is given by $\alpha=p$: using \eqref{eq:kmass} and \eqref{eq:kmean} we have
\begin{align*}
1-\alpha&=\frac{\sum_{j=1}^\infty p(1-p)^{j-1}[1-(1-p)^{j-1}]^{n-1}}{\sum_{j=1}^\infty p(1-p)^{j-1}[1-(1-p)^j]^{n-1}}\\
&=(1-p)\frac{\sum_{j=2}^\infty p(1-p)^{j-1}[1-(1-p)^{j-1}]^{n-1}}{\sum_{j=1}^\infty p(1-p)^{j}[1-(1-p)^j]^{n-1}}=1-p\,.
\end{align*}
Numerical illustration of the upper bound of Theorem \ref{thm:discrete}(a) in this geometric example is given in Figure \ref{fig:geometric} for the case of $n=20$ and various small-to-moderate values of $p$. We focus on such values of $p$ here since the proof of Theorem \ref{thm:discrete}(a) is geared towards this case: see Remark \ref{rem:small_alpha} below. Other values of $n$ give upper bounds which are very similar to those shown on Figure \ref{fig:geometric}. As expected (see \eqref{eq:rem1} in Remark \ref{rem:small_alpha} below), our upper bound is very close to 
\[
2p\left(\frac{1}{1-p}-(1-p)\right)
\]
for those values of $p$ that we consider here.

We note, however, that even for moderate $p$ there seems to be significant room for improvement in the upper bounds we obtain. For $n=20$ and for several values of $p$ between $0.1$ and $0.2$, we simulated $10^7$ independent copies of $K_n$ for each $p$ and estimated the total variation distance $d_\text{TV}(K_n,L)$, with $L$ as in Theorem \ref{thm:discrete}(a), corresponding to that choice of $p$. In each case we obtained an estimate of order $10^{-5}$, significantly smaller than the upper bounds shown on Figure \ref{fig:geometric} for these values of $p$. 
\begin{figure}
\centering
\includegraphics[scale=0.55]{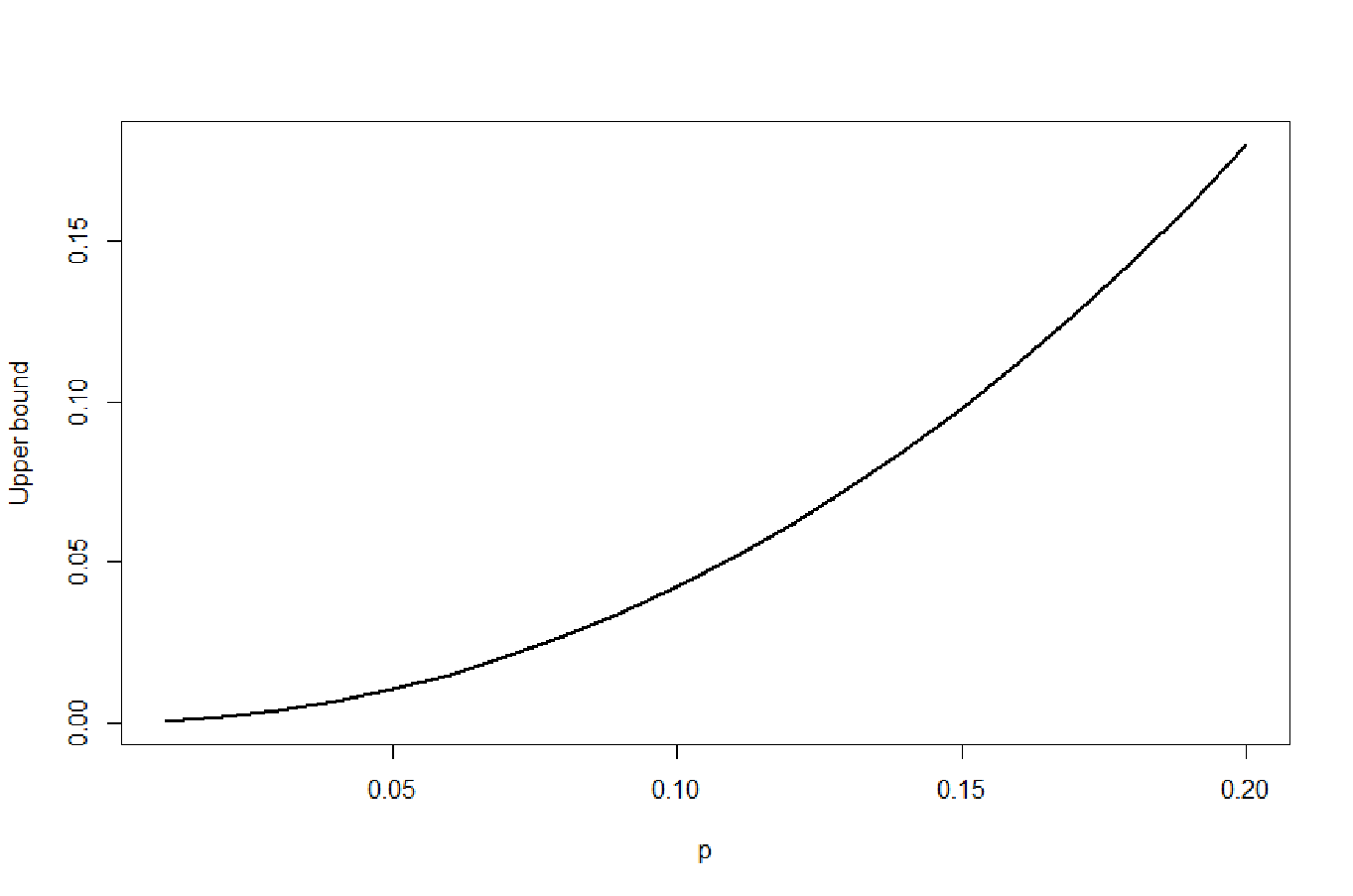}
\caption{The upper bound of Theorem \ref{thm:discrete}(a) in the case where $X$ has a geometric distribution, evaluated for $n=20$ and various values of $p$}
\label{fig:geometric}
\end{figure}
\end{example}

Our main Poisson approximation result in the discrete case is the following.
\begin{theorem}\label{thm:poisson}
Let $X_1,\ldots,X_n$ be positive, integer-valued, i$.$i$.$d$.$ random variables with maximum $M_n$. Let $K_n$ be defined as in \eqref{eq:kdef}, with probability mass function given by \eqref{eq:kmass}. Let $Y\sim\text{Pois}(\lambda)$, where $\lambda=\mathbb{E}[(K_n)_2]/\mathbb{E}[K_n]$. Then,
\begin{multline*}
d_\text{TV}(K_n,Y)\leq\frac{\sqrt{\mathbb{E}[(K_n)_2]-\mathbb{E}[K_n](\mathbb{E}[K_n]-1)}}{2\mathbb{E}[K_n]}+\sqrt{\frac{\mathbb{E}[K_n]}{4\mathbb{E}[(K_n)_2]}}\\
+\frac{(n-1)\mathbb{E}[(K_n)_3]}{(n-2)\mathbb{E}[(K_n)_2]}-\frac{(n-2)\mathbb{E}[(K_n)_2]}{(n-1)\mathbb{E}[K_n]}\,.
\end{multline*}
\end{theorem}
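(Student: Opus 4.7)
The plan is to use Stein's method for the Poisson distribution with parameter $\lambda$, combined with an exchangeable size-biased coupling of $K_n$ built from the representation $K_n=\sum_{i=1}^nA_i$ with $A_i=\mathbb{1}[X_i=M_n]$. First I would invoke the Poisson Stein equation $\lambda f(k+1)-kf(k)=\mathbb{1}_B(k)-\mathbb{P}(Y\in B)$ for $Y\sim\mathrm{Pois}(\lambda)$, along with standard Stein-factor bounds on $\|f_B\|_\infty$ and $\|\Delta f_B\|_\infty$ (of order $\lambda^{-1/2}$ and $\lambda^{-1}$ respectively), to reduce the problem to estimating $|\mathbb{E}[\lambda f_B(K_n+1)-K_nf_B(K_n)]|$. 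By exchangeability of the $A_i$, the size-biased variable $K_n^s$ satisfies $\mathbb{E}[K_nf(K_n)]=\mu\,\mathbb{E}[f(K_n^s)]$ (with $\mu=\mathbb{E}[K_n]$) and has the law of $K_n$ conditional on $X_1=M_n$; conditioning further on $M_n=j$ gives $K_n^s-1\sim\mathrm{Bin}(n-1,p(j)/F(j))$, whence \eqref{eq:kmean} yields $\mathbb{E}[K_n^s]=\lambda+1$ --- this explains the choice of $\lambda$.

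A key preliminary identity is
\[
d_\text{TV}(K_n,K_n^s)=\frac{\mathbb{E}|K_n-\mu|}{2\mu}\leq\frac{\sqrt{\mathrm{Var}(K_n)}}{2\mu},
\]
which after rewriting $\mathrm{Var}(K_n)=\mathbb{E}[(K_n)_2]-\mu(\mu-1)$ matches term one of the stated bound directly. I would combine this estimate with the Stein-factor bound $\|f_B\|_\infty\leq(4\lambda)^{-1/2}=\tfrac12\sqrt{\mathbb{E}[K_n]/\mathbb{E}[(K_n)_2]}$ --- producing term two --- to handle the portion of the Stein expectation involving the centering discrepancy between $\mu$ and $\lambda$ together with the difference $\mathbb{E}[f(K_n)]-\mathbb{E}[f(K_n^s)]$. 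The remaining portion is handled by a second round of size-biasing: conditioning on $X_1=X_2=M_n$ introduces a doubly size-biased variable $K_n^{(2)}$, satisfying $K_n^{(2)}-2\sim\mathrm{Bin}(n-2,p(j)/F(j))$ conditionally on $M_n=j$ and $\mathbb{E}[K_n^{(2)}]=2+\mathbb{E}[(K_n)_3]/\mathbb{E}[(K_n)_2]$. Comparing the conditional binomial means in the $(n-1)$- and $(n-2)$-trial frames (each equal to $p(j)/F(j)$ times the number of trials) naturally produces the combinatorial prefactors $(n-1)/(n-2)$ and $(n-2)/(n-1)$ of term three, together with the factorial-moment ratios appearing there.

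The main obstacle is that $\mathbb{E}[K_n]\neq\lambda$ in general, so the classical size-bias Stein--Chen coupling bound does not directly target $\mathrm{Pois}(\lambda)$ --- the centering correction afforded by the preliminary $d_\text{TV}(K_n,K_n^s)$ identity is essential for this reason. Obtaining the sharp combinatorial prefactors in term three (rather than looser asymptotic approximations) will require careful bookkeeping of the conditional binomial parameters across the single and double size-bias constructions, and this is the step I expect to require the most care.
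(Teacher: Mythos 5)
You have several of the right ingredients --- the size-bias identity, the mixed binomial representation $K_n^\star-1\,|\,M\sim\text{Bin}(n-1,q(M))$ with $q(m)=p(m)/F(m)$, the observation that $\mathbb{E}[K_n^\star]=\lambda+1$ explains the choice of $\lambda$, and the identity $d_\text{TV}(K_n,K_n^\star)=\mathbb{E}|K_n-\mu|/(2\mu)\leq\sqrt{\text{Var}(K_n)}/(2\mu)$ --- but the architecture you propose for assembling them does not deliver the stated bound, and the paper in fact never runs the Poisson Stein equation at the top level. The paper simply writes the triangle inequality
\[
d_\text{TV}(K_n,Y)\leq d_\text{TV}(K_n,K_n^\star)+d_\text{TV}(Y,Y+1)+d_\text{TV}(K_n^\star-1,Y)\,,
\]
and bounds the three distances separately. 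This matters for two concrete reasons. First, your second term is supposed to come from a Stein-factor bound $\|f_B\|_\infty\leq(4\lambda)^{-1/2}$; this is not a standard (or, as far as I know, correct) Stein factor --- the sharp-order bound is $\min(1,\sqrt{2/(e\lambda)})$ with constant $\sqrt{2/e}>1/2$ --- whereas in the paper the term $1/(2\sqrt{\lambda})$ is the \emph{shift-smoothness} estimate $d_\text{TV}(Y,Y+1)\leq1/(2\sqrt{\lambda})$, a genuinely different object. Second, the first term of the theorem is exactly $\sqrt{\text{Var}(K_n)}/(2\mu)$ with no multiplicative factor. If you feed $d_\text{TV}(K_n,K_n^\star)$ into the Stein expectation $\lambda\mathbb{E}[f(K_n+1)]-\mu\mathbb{E}[f(K_n^\star)]$ you inevitably pick up $\|f_B\|_\infty$ times a prefactor of order $\lambda$, i.e.\ an extra $\sqrt{\lambda}$, and you will not recover the clean term one. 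The only way these two terms appear unadorned is via the triangle inequality above.

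The remaining gap is in your treatment of the third and fourth terms. The paper does not use a doubly size-biased variable. It bounds $d_\text{TV}(K_n^\star-1,Y)$ by comparing the mixed binomial first to the mixed Poisson $Y^\dagger\,|\,M\sim\text{Pois}((n-1)q(M))$ using the conditional bound $d_\text{TV}(\text{Bin}(n,q),\text{Pois}(nq))\leq q$ (cost $\mathbb{E}[q(M)]$), and then the mixed Poisson to $\text{Pois}(\lambda)$ using the Barbour--Holst--Janson bound $\text{Var}((n-1)q(M))/\lambda$; the two factorial-moment ratios in the theorem are exactly $\mathbb{E}[q(M)]+(n-1)^2\text{Var}(q(M))/\lambda$ rewritten via $\mathbb{E}[q(M)]=\mathbb{E}[(K_n)_2]/((n-1)\mathbb{E}[K_n])$ and $\mathbb{E}[q(M)^2]=\mathbb{E}[(K_n)_3]/((n-1)(n-2)\mathbb{E}[K_n])$. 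Your double-size-bias route produces the right moment quantities in principle, but the ``careful bookkeeping'' you defer is precisely where the prefactors $(n-1)/(n-2)$ and $(n-2)/(n-1)$ must emerge, and as written there is no argument that it reproduces them; the mixing-variance comparison is the missing idea.
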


The proof of Theorem \ref{thm:poisson} is given in Section \ref{sec:poisson}. Before moving on to the absolutely continuous case, we again use the example in which $X$ is geometric to illustrate our main result here.
\begin{example}
We again let $X\sim\text{Geom}(p)$ have a geometric distribution, where we take $p=1-\mu/n$ for some suitable $\mu>0$. In this case, Brands \emph{et al$.$} \cite{brands94} (see their Remark 3) show that, as $n\to\infty$, $K_n$ has a defective Poisson limit, with mass $e^{-\mu}$ at infinity. It is therefore reasonable to expect that for (large) fixed $\mu$ and large $n$ we will obtain a good Poisson approximation in this case. We illustrate the upper bound we obtain from Theorem \ref{thm:poisson} in this setting for selected values of $\mu$ and $n$ in Table \ref{tab:geom}.

\begin{table}
\centering
\begin{tabular}{|cc|ccccc|}
\hline
 & & \multicolumn{5}{c|}{$n$} \\
 & & $10^5$ & $10^6$ & $10^7$ & $10^8$ & $10^9$\\
\hline
\multirow{5}{*}{$\mu$} & $100$ & 0.330 & 0.131 & 0.103 & 0.100 & 0.100 \\
& 300 & --- & 0.283 & 0.094 & 0.062 & 0.058\\
& 500 & --- & 0.610 & 0.131 & 0.058 & 0.046\\
& 700 & --- &  ---  & 0.184 & 0.064 & 0.041\\
& 900 & --- &  ---  & 0.251 & 0.073 & 0.039\\
\hline
\end{tabular}
\caption{The upper bound of Theorem \ref{thm:poisson} in the case where $X\sim\text{Geom}(1-\mu/n)$ for selected values of $\mu$ and $n$. The symbol `--' indicates that the upper bound is larger than 1, and therefore uninformative}
\label{tab:geom}
\end{table}
\end{example}

\subsection{The absolutely continuous case}

We turn now to the case where $X$ is a real-valued random variable with probability density function $f(x)$ and distribution function $F(x)=\int_{-\infty}^xf(y)\,\text{d}y$ for $x\in\mathbb{R}$. Following Pakes and Li \cite{pakes98}, we consider the more general setting of negative binomial approximation for the number of observations within a certain distance of one of the order statistics of our sample $X_1,\ldots,X_n$. To that end, we let $X_{1:n}<\cdots<X_{n:n}$ denote the order statistics of our sample, and for $\ell=1,\ldots,n-1$ we define
\begin{equation}\label{eq:kdef2}
K_n(a,\ell)=|\left\{i\in\{1,\ldots,n\}:X_{n-\ell+1:n}-a<X_i\leq X_{n-\ell+1:n}\right\}|\,,
\end{equation}
the number of our $n$ observations that are within a distance $a$ of the $\ell$th order statistic. Pakes and Li (see Theorem 1 of \cite{pakes98}) use a mixed binomial representation of $K_n(a,\ell)-1$ to show that if $X$ is in the maximum domain of attraction of a Gumbel law, then $K_n(a,\ell)-1$ may be well approximated by a negative binomial random variable. We aim to complement this result with explicit error bounds in total variation distance. 

We will write that $Z\sim\text{NB}(\ell,1-\beta)$ has a negative binomial distribution with parameters $\ell>0$ and $\beta\in(0,1)$ if $Z$ satisfies
\[
\mathbb{P}(Z=k)=\frac{\Gamma(\ell+k)}{\Gamma(\ell)k!}(1-\beta)^\ell\beta^k\,,
\]
for $k=0,1,\ldots$, where $\Gamma(\cdot)$ is the gamma function.

Our main result in the absolutely continuous setting is the following error bound in the approximation of $K_n(a,\ell)-1$ by a negative binomial distribution with the same mean.
\begin{theorem}\label{thm:continuous}
Let $X_1,\ldots,X_n$ be real-valued, i$.$i$.$d$.$ random variables as above, and $K_n(a,\ell)$ be as defined in \eqref{eq:kdef2}. Let $Z\sim\text{NB}(\ell,1-\beta)$ with $\beta=\frac{\mathbb{E}[K_n(a,\ell)]-1}{\mathbb{E}[K_n(a,\ell)]+\ell-1}$. Then
\begin{multline*}
d_\text{TV}(K_n(a,\ell)-1,Z)\\
\leq\frac{1-(1-\beta)^\ell}{\beta\ell}\mathbb{E}[K_n(a,\ell)-1]\left(\beta
+(1-\beta)\left[(n-\ell-1)\frac{M_2}{M_1}-(n-\ell-2)M_1\right]\right)\,,
\end{multline*}
where
\[
M_j=n\binom{n-1}{\ell-1}\int_{-\infty}^\infty(1-F(x))^{\ell-1}F(x)^{n-\ell}\left(1-\frac{F(x-a)}{F(x)}\right)^jf(x)\,\text{d}x\,,
\]
for $j=1,2$.
\end{theorem}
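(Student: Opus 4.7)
The strategy combines a mixed-binomial representation of $K_n(a,\ell)$ with a Stein-method bound for negative-binomial approximation of mixed binomials. Condition on $Y:=X_{n-\ell+1:n}$: the $\ell-1$ observations exceeding $Y$ cannot lie in $(Y-a,Y)$, while, conditionally on $Y=x$, the $n-\ell$ observations below $Y$ are i.i.d.\ with density $f(\cdot)/F(x)$ on $(-\infty,x)$, each falling in $(x-a,x)$ with probability $p(x):=1-F(x-a)/F(x)$. Therefore
\[
K_n(a,\ell)\,\mid\,Y \;\sim\; \mathrm{Bin}\bigl(n-\ell,\,p(Y)\bigr),
\]
and integrating against the density $n\binom{n-1}{\ell-1}F(x)^{n-\ell}(1-F(x))^{\ell-1}f(x)$ of $Y$ identifies $M_j=\mathbb{E}[p(Y)^j]$, with $\mathbb{E}[K_n(a,\ell)]=(n-\ell)M_1$ and $\mathbb{E}[K_n(a,\ell)(K_n(a,\ell)-1)]=(n-\ell)(n-\ell-1)M_2$ following from the standard Binomial factorial moments.

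The Stein operator for $\mathrm{NB}(\ell,1-\beta)$ is $\mathcal{A}g(k)=\beta(k+\ell)g(k+1)-k\,g(k)$. For each test set $A\subseteq\mathbb{Z}^+$ the Stein equation $\mathcal{A}g_A=\mathbf{1}_A-\mathbb{P}(Z\in A)$ (with the boundary convention $g_A(0)=0$) has a unique bounded solution, and the Gamma-mixed Poisson representation of the NB gives $\|g_A\|_\infty\leq\tfrac{1-(1-\beta)^\ell}{\beta\ell}$, the NB analogue of the standard Poisson Stein factor $(1-e^{-\lambda})/\lambda$. Consequently,
\[
d_{\mathrm{TV}}\bigl(K_n(a,\ell)-1,Z\bigr)\;\leq\;\frac{1-(1-\beta)^\ell}{\beta\ell}\,\sup_{\|g\|_\infty\leq 1}\bigl|\mathbb{E}[\mathcal{A}g(K_n(a,\ell)-1)]\bigr|.
\]

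Writing $K:=K_n(a,\ell)$, $m:=n-\ell$, $P:=p(Y)$, the conditional Binomial Stein identity $(1-P)\,\mathbb{E}[Kh(K)\mid P]=P\,\mathbb{E}[(m-K)h(K+1)\mid P]$, applied with $h(k)=g(k-1)$, converts $\mathbb{E}[(K-1)g(K-1)]$ into expressions in $g(K)$; after substitution and absorbing the $K=0$ boundary contribution, the Stein expectation rewrites as
\[
\mathbb{E}\!\left[g(K)\Bigl(\beta(K+\ell-1)-\tfrac{P(m-K)}{1-P}\Bigr)\right].
\]
Using $\mathbb{E}[K\mid P]=mP$ and $\mathbb{E}[K(K-1)\mid P]=m(m-1)P^2$, together with the expansion of $1/(1-P)$, reduces the expectation of the bracket to an affine combination of $M_1=\mathbb{E}[P]$ and $M_2=\mathbb{E}[P^2]$. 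The mean-matching choice $\beta=(\mathbb{E}[K]-1)/(\mathbb{E}[K]+\ell-1)$ — equivalent to $\mathbb{E}[Z]=\mathbb{E}[K-1]$ — is precisely what kills the leading-order terms, leaving the residue
\[
\mathbb{E}[K-1]\Bigl(\beta+(1-\beta)\bigl[(n-\ell-1)\tfrac{M_2}{M_1}-(n-\ell-2)M_1\bigr]\Bigr),
\]
and multiplication by the Stein factor yields the claimed bound.

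The principal obstacle is the rearrangement in the last step: the Binomial-Stein substitution and the expansion of $1/(1-P)$ must be organised so that the mean-matching cancellation is exact and the residue collapses precisely to the variance-type combination $(n-\ell-1)M_2/M_1-(n-\ell-2)M_1\approx(n-\ell-2)\,\mathrm{Var}(P)/\mathbb{E}[P]+M_2/M_1$, rather than leaving a miscellany of lower-order corrections. A subsidiary technical task, carried out alongside, is establishing the Stein factor $\tfrac{1-(1-\beta)^\ell}{\beta\ell}$ uniformly in $\ell$ for the NB Stein equation — part of the Stein-method apparatus for mixed-binomial targets that the paper develops.
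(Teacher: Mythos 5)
Your first step matches the paper's: conditioning on $X_{n-\ell+1:n}$ gives the mixed binomial representation of Pakes--Li (their Lemma 1) and identifies $M_j$ as the moments of $r_a(X_{n-\ell+1:n})$ against the order-statistic density (modulo a small off-by-one: the relevant mixed binomial is $K_n(a,\ell)-1$, consistent with $\mathbb{E}[K_n(a,\ell)]-1=(n-\ell)M_1$ and with the choice of $\beta$, not $K_n(a,\ell)$ itself). The gap lies in the negative binomial approximation of this mixed binomial, which is the real content of the theorem (the paper's Theorem \ref{thm:negbin}). First, the factor $\frac{1-(1-\beta)^\ell}{\beta\ell}$ is Brown--Phillips's bound on $\sup_k|\Delta g_h(k)|$, the first difference of the Stein solution, not on $\|g_h\|_\infty$; the Poisson analogue $(1-e^{-\lambda})/\lambda$ is likewise a difference bound, the sup-norm of the solution being only of order $\lambda^{-1/2}$. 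So the estimate on which you hang the whole argument, $\|g_A\|_\infty\leq\frac{1-(1-\beta)^\ell}{\beta\ell}$, is not available. Second, and decisively, the final reduction is not carried out and would fail as sketched: after the conditional binomial Stein identity the quantity to control is $\mathbb{E}[g(K)R]$ with $R=\beta(K+\ell-1)-P(m-K)/(1-P)$, plus a leftover term, since $\mathbb{E}[(K-1)g(K-1)]=\mathbb{E}[Kg(K-1)]-\mathbb{E}[g(K-1)]$ and the second expectation is not a boundary contribution you can absorb. For a bound uniform over test functions you must control $\mathbb{E}|R|$, not $|\mathbb{E}[R]|$; $R$ changes sign (it is positive on $\{K=m\}$ and negative on $\{K=0\}$ when $\ell=1$), so the ``mean-matching cancellation'' you invoke does not survive the absolute value, and nothing in your sketch shows the residue collapses to $\mathbb{E}[K-1]\bigl(\beta+(1-\beta)[(n-\ell-1)M_2/M_1-(n-\ell-2)M_1]\bigr)$. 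You yourself flag this rearrangement as the principal obstacle; it is exactly the unproved step.

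The paper closes this gap by a different mechanism. Via size-biasing it writes $\mathbb{E}[\beta(\ell+W)g_h(W+1)-Wg_h(W)]=\mathbb{E}[W]\,\mathbb{E}[g_h(W^\prime)-g_h(W^\star)]$, then exploits the mixed-binomial structure to couple $W^\star=1+A$ with $A\sim\text{MBin}(n-\ell-1,Q^\star)$ to an explicit construction of $W^\prime$, and uses stochastic ordering ($Q^\star$ dominates the mixture $Q^+$, hence $A$ dominates its counterpart) to bound $\mathbb{E}|W^\prime-W^\star|$; only the difference bound $\|\Delta g_h\|\leq\frac{1-(1-\beta)^\ell}{\beta\ell}$ is needed. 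The size-biased mean $\mathbb{E}[Q^\star]=M_2/M_1$ in the final bound is produced precisely by this coupling; a direct sup-norm/absolute-value argument of the kind you outline has no route to it. To repair your proof you would need to either reproduce this size-bias coupling argument or find some other way to bound $\mathbb{E}|R|$ by the stated quantity, which its sign changes make implausible.
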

Consider the case where $a$ and $\ell$ are fixed (i.e., independent of $n$). The mixed binomial representation of $K_n(a,\ell)-1$ (see Lemma 1 of \cite{pakes98} and Section \ref{sec:pf_cont} below) gives that 
\[
\mathbb{E}[K_n(a,\ell)]-1=(n-\ell)M_1\,,
\]
so that Theorem \ref{thm:continuous} gives an upper bound of order $O(nM_1[1+\frac{nM_2}{M_1}])=O(nM_1+n^2M_2)$. We will see, for example, in Example \ref{eg:gumbel} below that when the $X_i$ have a Gumbel distribution, $M_j=O(n^{-j})$ for $j=1,2$ in this setting, so that our upper bound is of order $O(1)$. We note, however, that in this example and more generally, if $a$ and $\ell$ are allowed to depend on $n$  the order of the upper bound of Theorem \ref{thm:continuous} will vary.

We conclude this section by considering two applications of Theorem \ref{thm:continuous}.

\begin{example}\label{eg:gumbel}
Let $X$ have a Gumbel distribution with density function $f(x)=e^{-x-e^{-x}}$ and distribution function $F(x)=e^{-e^{-x}}$ for $x\in\mathbb{R}$, which has mean equal to $0.57721\ldots$, the Euler--Mascheroni constant, and standard deviation $\pi/\sqrt{6}$. We consider a sample of $n$ i$.$i$.$d$.$ observations from $X$, and establish a geometric approximation bound for the number of these which are within distance $a$ of the sample maximum; that is, we take $\ell=1$ in Theorem \ref{thm:continuous}. Note that the maximum of our $n$ data points has itself a Gumbel distribution, shifted $\log(n)$ units to the right. With this choice of $X$ we have that
\begin{align*}
M_1&=n\int_{-\infty}^\infty(e^{-e^{-x}})^{n-1}\left(1-\frac{e^{-e^{-(x-a)}}}{e^{-e^{-x}}}\right)e^{-x-e^{-x}}\,\text{d}x=\frac{e^a-1}{n+e^a-1}\,,\text{ and}\\
M_2&=n\int_{-\infty}^\infty(e^{-e^{-x}})^{n-1}\left(1-\frac{e^{-e^{-(x-a)}}}{e^{-e^{-x}}}\right)^2e^{-x-e^{-x}}\,\text{d}x=\frac{2(e^a-1)^2}{(n+e^a-1)(n+2e^a-2)}\,.
\end{align*}
Again using the mixed binomial representation of $K_n(a,1)-1$ we have that 
\[
\mathbb{E}[K_n(a,1)]-1=(n-1)M_1=\frac{(n-1)(e^a-1)}{n+e^a-1}\,,
\]
so that $\beta=\frac{(n-1)(e^a-1)}{ne^a}$. With $Z\sim\text{NB}(1,1-\beta)$, Theorem \ref{thm:continuous} then gives
\begin{multline*}
\nonumber d_\text{TV}(K_n(a,1)-1,Z)\leq\frac{(n-1)(e^a-1)}{n+e^a-1}\bigg(\frac{(n-1)(e^a-1)}{ne^a}+\\
\left(1-\frac{(n-1)(e^a-1)}{ne^a}\right)\left[\frac{2(n-2)(e^a-1)}{n+2e^a-2}-\frac{(n-3)(e^a-1)}{n+e^a-1}\right]\bigg)\,,
\end{multline*}
which simplifies to give
\begin{equation}\label{eq:gumbel}
d_\text{TV}(K_n(a,1)-1,Z)\leq\frac{(n-1)(e^a-1)^2}{e^a(n+e^a-1)}\left(1+\frac{n-2}{n+2e^a-2}\right)\,.
\end{equation}
We can see that unfortunately our upper bound is not strong enough to converge to zero as $n\to\infty$ for any fixed $a$, in which setting we know we have a geometric limit thanks to Theorem 1 of \cite{pakes98}. The coupling used in the proof of Theorem \ref{thm:continuous} does not seem to be strong enough to establish this. Nevertheless, our upper bound does tend to zero if $a=a(n)\to0$ as $n\to\infty$, and also gives reasonable numerical bounds on $d_\text{TV}(K_n(a,1)-1,Z)$, even for moderate values of $n$ and $a$. This is illustrated in Figure \ref{fig:gumbel}, which gives the upper bound \eqref{eq:gumbel} in the cases $n=20$ and $n=100$ for various values of $a$. The numerical upper bounds of Figure \ref{fig:gumbel} do, however, leave some room for improvement, overestimating the total variation distance by at least an order of magnitude for the examples illustrated. Using $10^7$ independent simulations of $K_n(a,1)$ for $n=20$ and each $a=0.05,0.1,0.15,0.2$, we estimate $d_\text{TV}(K_n(a,1)-1,Z)$ as in \eqref{eq:gumbel} to be $0.0002,0.0008,0.0014,0.0024$, respectively, for each value of $a$.
\begin{figure}
\centering
\includegraphics[scale=0.55]{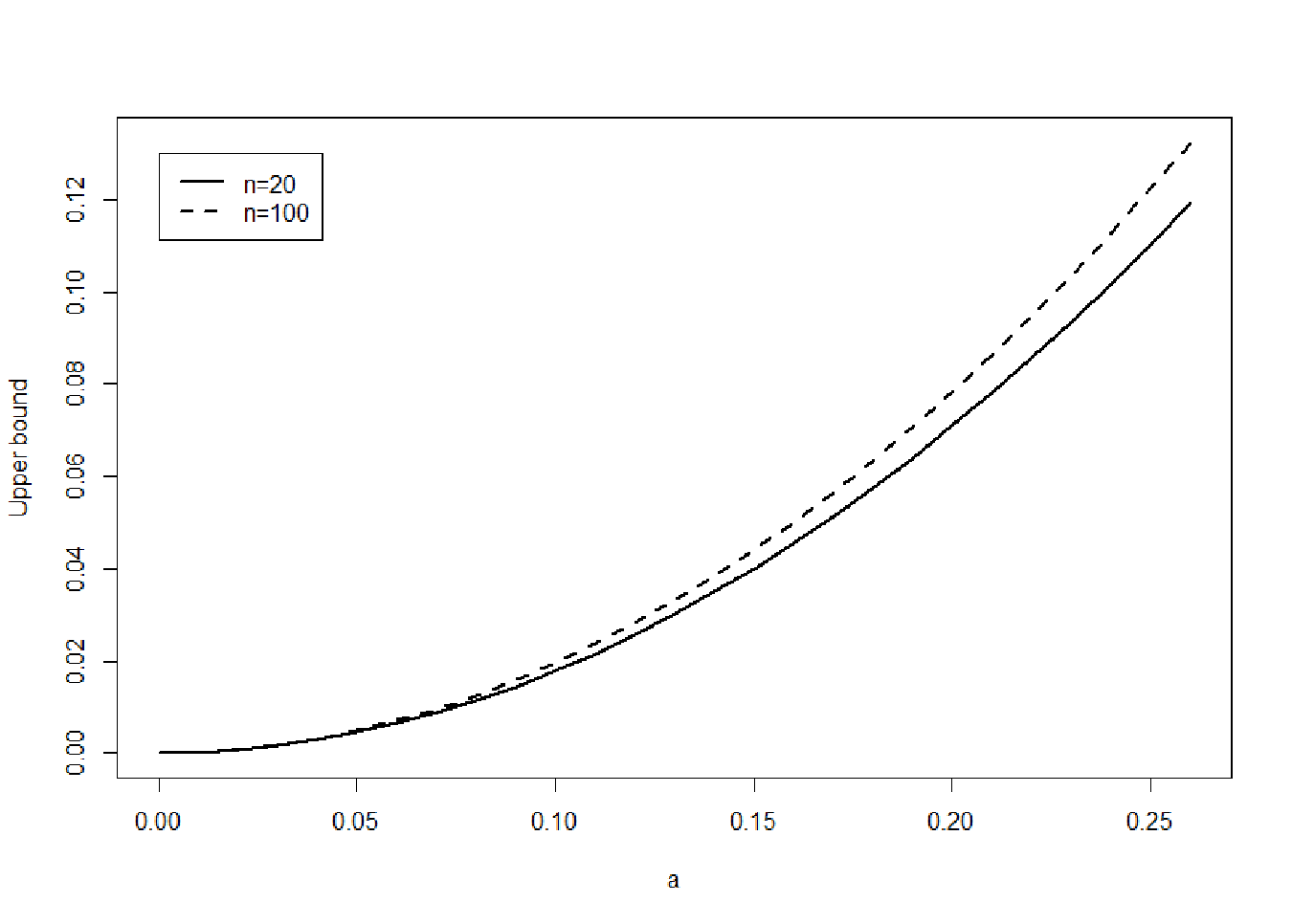}
\caption{The upper bound \eqref{eq:gumbel} for the case where $X$ has a Gumbel distribution, evaluated for various values of $a$ in the cases $n=20$ and $n=100$}
\label{fig:gumbel}
\end{figure}
\end{example}

\begin{example}
Let $X\sim\text{U}(0,1)$ have a uniform distribution on the interval $(0,1)$, so that $f(x)=1$ and $F(x)=x$ for $0\leq x\leq 1$. The uniform distribution is in the maximum domain of attraction of a Weibull (not a Gumbel) distribution, so although we do not expect a negative binomial limit for $K_n(a,\ell)-1$ in the case of fixed $a$ here, we may nevertheless apply our theorem to give explicit upper bounds in the approximation of $K_n(a,\ell)-1$ which may be useful both numerically and in determining conditions on $a=a(n)$ and $\ell=\ell(n)$ under which we may see a distribution which is close to a negative binomial. We assume that $a=a(n)\in(0,1)$ for each $n$, and to keep the exposition concise we will here again assume that $\ell=1$; similar calculations may be carried out for other values of $\ell$. With our choice of $X$ we have that
\[
1-\frac{F(x-a)}{F(x)}=\left\{\begin{array}{ll}
1\,, & x\in(0,a]\,,\\
\frac{a}{x}\,, & x\in(a,1)\,,
\end{array}\right.
\]
so that for $j=1,2$ we have
\[
M_j=n\left(\int_0^ax^{n-1}\,\text{d}x+a^j\int_a^1x^{n-j-1}\,\text{d}x\right)\,.
\]
Hence, $M_1=\frac{an-a^n}{n-1}$ and $M_2=\frac{a^2n-2a^n}{n-2}$. Recall that $\mathbb{E}[K_n(a,1)]-1=(n-1)M_1$. If we choose $a$ such that $an\to0$ as $n\to\infty$, then
\[
\beta=\frac{(n-1)M_1}{(n-1)M_1+1}=\frac{an-a^n}{an-a^n+1}\to0\,,
\]
as $n\to\infty$. From Theorem \ref{thm:continuous} we obtain the upper bound
\begin{align*}
d_\text{TV}(K_n(a,1)-1,Z)
&\leq(n-1)M_1\left(\beta
+(1-\beta)\left[(n-2)\frac{M_2}{M_1}-(n-3)M_1\right]\right)\\
&=\frac{an-a^n}{an-a^n+1}\left(\frac{(n-1)(a^2n-2a^n)}{an-a^n}+\frac{2(an-a^n)}{n-1}\right)\,,
\end{align*}
where $Z\sim\text{NB}(1,1-\beta)$. This upper bound converges to zero as $n\to\infty$ under our assumption that $an\to0$.
\end{example}

In the examples above we are able to explicitly evaluate the integrals $M_j$ needed in the upper bound of Theorem \ref{thm:continuous}. In other examples where analytic expressions for these integrals are not available, they may nevertheless be evaluated numerically.

\section{Stein's method for the logarithmic distribution}\label{sec:logarithmic}

In this section we will prove Theorem \ref{thm:discrete}(a) by developing the tools required to apply Stein's method for probability approximation to a logarithmic target distribution; these tools are also used as part of the proof of Theorem \ref{thm:discrete}(b) in Section \ref{sec:negbin} below, and may be of independent interest. To the best of our knowledge, this is the first use of Stein's technique for approximation by a logarithmic distribution. As a further illustration of our framework, we also give, in Section \ref{sec:LogVsNB}, an error bound in the logarithmic approximation of a negative binomial distribution. We refer the reader to the survey by Ross \cite{ross11} for an introduction to Stein's method, though we will make the exposition here self-contained.

For a non-negative random variable $Y$ with positive mean, we let $Y^\star$ denote the size-biased version of $Y$, defined by
\begin{equation}\label{eq:sb_def}
\mathbb{E}[f(Y^\star)]=\frac{\mathbb{E}[Yf(Y)]}{\mathbb{E}[Y]}
\end{equation}
for all functions $f:\mathbb{R}^+\to\mathbb{R}$ for which the expectation on the right-hand side exists. In the special case where $Y$ takes non-negative integer values, $Y^\star$ may equivalently be defined by writing
\[
\mathbb{P}(Y^\star=k)=\frac{k\mathbb{P}(Y=k)}{\mathbb{E}[Y]}
\]
for $k=0,1,\ldots$. For a logarithmic random variable $L\sim\text{L}(\alpha)$ with mean $\mathbb{E}[L]=\frac{-\alpha}{(1-\alpha)\log(1-\alpha)}$ and a bounded function $f:\mathbb{Z}^+\to\mathbb{R}$ we may write
\begin{align*}
\mathbb{E}f(L^\star-1)&=(1-\alpha)\sum_{k=0}^\infty\alpha^kf(k)\\
&=(1-\alpha)f(0)-(1-\alpha)\log(1-\alpha)\sum_{k=1}^\infty kf(k)\mathbb{P}(L=k)\\
&=(1-\alpha)f(0)+\frac{\alpha}{\mathbb{E}[L]}\sum_{k=1}^\infty kf(k)\mathbb{P}(L=k)
=(1-\alpha)f(0)+\alpha\mathbb{E}[f(L^\star)]\,.
\end{align*}
That is, $L^\star-1$ is equal in distribution to $I_\alpha L^\star$, where $I_\alpha$ has a Bernoulli distribution with mean $\alpha$ and is independent of $L^\star$.

This leads us to define the following Stein equation for a logarithmic target distribution: for a given function $h:\mathbb{Z}^+\to\mathbb{R}$ with $\mathbb{E}[h(L)]=0$ we let $f_h:\mathbb{Z}^+\to\mathbb{R}$ be such that $f_h(0)=0$ and 
\begin{equation}\label{eq:LogSteinEq}
h(k)=kf_h(k-1)-\alpha kf_h(k)
\end{equation}
for $k=1,2,\ldots$. This is motivated by noting that replacing $k$ by our logarithmic random variable $L$ and taking expectations on the right-hand side gives $\mathbb{E}f_h(L^\star-1)-\alpha\mathbb{E}[f_h(L^\star)]$. 

By replacing $k$ by a random variable $K$ and taking expectations in our Stein equation \eqref{eq:LogSteinEq}, we may then write
\begin{equation}\label{eq:LogTV}
d_\text{TV}(K,L)=\sup_{h\in\mathcal{H}}|\mathbb{E}[h(K)]|
=\sup_{h\in\mathcal{H}}|\mathbb{E}[Kf_h(K-1)-\alpha Kf_h(K)]|\,,
\end{equation}
where 
\begin{equation}\label{eq:TVset}
\mathcal{H}=\{I(\cdot\in E)-\mathbb{P}(L\in E):E\subseteq\mathbb{Z}^+\}
\end{equation} 
and $I$ denotes an indicator function. In order to bound this final expression, we will need to control the behaviour of $f_h$ for $h\in\mathcal{H}$. It is straightforward to check that the solution to \eqref{eq:LogSteinEq} is given by
\begin{align*}
f_h(k)&=-\frac{1}{\alpha^{k+1}}\sum_{j=1}^kh(j)\frac{\alpha^{j}}{j}
=\frac{\log(1-\alpha)}{\alpha^{k+1}}\sum_{j=1}^kh(j)\mathbb{P}(L=j)\\
&=-\frac{\log(1-\alpha)}{\alpha^{k+1}}\sum_{j=k+1}^\infty h(j)\mathbb{P}(L=j)
=\frac{1}{\alpha}\sum_{j=k+1}^\infty h(j)\frac{\alpha^{j-k}}{j}\\
&=\frac{1}{\alpha}\sum_{j=1}^\infty h(j+k)\frac{\alpha^j}{j+k}\,,
\end{align*}
where we use the fact that $\mathbb{E}[h(L)]=0$ for $h\in\mathcal{H}$. Hence, since $|h(x)|\leq1$ for $h\in\mathcal{H}$,
\begin{equation}\label{eq:LogSteinFactor}
|f_h(k)|\leq\frac{1}{\alpha}\sum_{j=1}^\infty\frac{\alpha^j}{j+k}\leq\frac{1}{\alpha}\sum_{j=1}^\infty\frac{\alpha^j}{j}=-\frac{\log(1-\alpha)}{\alpha}\,.
\end{equation}

\subsection{Logarithmic approximation for the negative binomial distribution}\label{sec:LogVsNB}

Before proceeding to the proof of Theorem \ref{thm:discrete}(a), we illustrate the framework we have set up with a simple example: a bound on the total variation distance between a negative binomial distribution and a logarithmic distribution.
\begin{example}
Let $L\sim\text{L}(\alpha)$ for some $\alpha\in(0,1)$, and $Z\sim\text{NB}(\ell,1-\beta)$ for some $\ell>0$ and $\beta\in(0,1)$. Following \eqref{eq:LogTV}, in order to bound the total variation distance between $Z$ and $L$ we bound $\mathbb{E}[Zf_h(Z-1)-\alpha Zf_h(Z)]$ for $h\in\mathcal{H}$. We begin by noting that for all bounded functions $g:\mathbb{Z}^+\to\mathbb{R}$, 
\[
\beta\mathbb{E}[(\ell+Z)g(Z+1)]=\mathbb{E}[Zg(Z)]\,;
\]
see Lemma 1 of \cite{brown99}. Hence,
\begin{align*}
\mathbb{E}[Zf_h(Z-1)-\alpha Zf_h(Z)]&=\mathbb{E}[(\beta\ell-(\alpha-\beta)Z)f_h(Z)]\\
&=\frac{(1-\alpha)\beta\ell}{1-\beta}\mathbb{E}[f_h(Z)]+(\alpha-\beta)\mathbb{E}[(\mathbb{E}[Z]-Z)f_h(Z)]\,,
\end{align*}
since $\mathbb{E}[Z]=\frac{\beta\ell}{1-\beta}$. Hence, using the bound \eqref{eq:LogSteinFactor} we have that, for $h\in\mathcal{H}$,
\begin{align*}
|\mathbb{E}[Zf_h(Z-1)-\alpha Zf_h(Z)]|&\leq-\frac{(1-\alpha)\beta\ell}{\alpha(1-\beta)}\log(1-\alpha)-\frac{(\alpha-\beta)}{\alpha}\log(1-\alpha)\sqrt{\text{Var}(Z)}\\
&=-\frac{\log(1-\alpha)\sqrt{\beta\ell}}{\alpha(1-\beta)}\left((1-\alpha)\sqrt{\beta\ell}+\alpha-\beta\right)\,,
\end{align*}
since $\text{Var}(Z)=\frac{\beta\ell}{(1-\beta)^2}$. Hence,
\[
d_\text{TV}(Z,L)\leq-\frac{\log(1-\alpha)\sqrt{\beta\ell}}{\alpha(1-\beta)}\left((1-\alpha)\sqrt{\beta\ell}+\alpha-\beta\right)\,.
\]
In particular, with the choice $\alpha=\beta$ we obtain $d_\text{TV}(Z,L)\leq-\log(1-\alpha)\ell\to0$ as $\ell\to0$, as expected.
\end{example}

\subsection{Proof of Theorem \ref{thm:discrete}(a)}

To establish Theorem \ref{thm:discrete}(a), we combine the mass function and moments given in \eqref{eq:kmass} and \eqref{eq:kmean}, respectively, with the following result giving an upper bound on the total variation distance between a positive, integer-valued random variable $K$ and $L\sim\text{L}(\alpha)$, where we choose $\alpha=\mathbb{P}(K^\star>1)$. This choice is convenient in the proof (as we will see following \eqref{eq:LogPf1} below), and also gives
\[
1-\alpha=\frac{\mathbb{P}(K=1)}{\mathbb{E}[K]}=\frac{\mathbb{P}(L=1)}{\mathbb{E}[L]}\,.
\]
\begin{theorem}\label{thm:log}
Let $K$ be a positive, integer-valued random variable with finite mean and $L\sim\text{L}(\alpha)$, where $\alpha=\mathbb{P}(K^\star>1)$. Then,
\[
d_\text{TV}(K,L)\leq-2\log(1-\alpha)\left(\mathbb{E}[K]-\frac{2(1-\alpha)}{\alpha}\mathbb{P}(K=2)\right)\,.
\]
\end{theorem}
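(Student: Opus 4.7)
The plan is to apply the Stein's method framework established above. By \eqref{eq:LogTV} we have $d_{\text{TV}}(K,L) = \sup_{h \in \mathcal{H}}|\mathbb{E}[Kf_h(K-1) - \alpha K f_h(K)]|$, so it suffices to bound the inner expectation uniformly in $h \in \mathcal{H}$. Using the definition of size-biasing at \eqref{eq:sb_def}, this becomes $\mathbb{E}[K]\cdot|\mathbb{E}[f_h(K^\star - 1)] - \alpha \mathbb{E}[f_h(K^\star)]|$, so the task reduces to bounding the latter centred quantity.

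The key manoeuvre I would carry out is to combine these two expectations into a single centred one. Since $f_h(0)=0$ we may rewrite $\mathbb{E}[f_h(K^\star-1)]$ as $\mathbb{E}[f_h(K^\star-1)\,I(K^\star\geq 2)]$, and the defining relation $\alpha=\mathbb{P}(K^\star>1)=\mathbb{E}[I(K^\star\geq 2)]$ lets us write $\alpha\mathbb{E}[f_h(K^\star)] = \mathbb{E}[I(K^\star\geq2)]\cdot \mathbb{E}[f_h(K^\star)]$. Subtracting these expressions yields
\[
\mathbb{E}[f_h(K^\star-1)]-\alpha\mathbb{E}[f_h(K^\star)] = \mathbb{E}\bigl[(f_h(K^\star-1)-\mathbb{E}[f_h(K^\star)])\,I(K^\star\geq2)\bigr].
\]
I would then split this expectation according to whether $K^\star=2$ or $K^\star\geq 3$. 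On $\{K^\star\geq 3\}$ we simply bound $|f_h(K^\star-1)-\mathbb{E}[f_h(K^\star)]|\leq 2C_0$, where $C_0=-\log(1-\alpha)/\alpha$, using \eqref{eq:LogSteinFactor} on both terms. On $\{K^\star=2\}$ a sharper estimate is available: rewriting $f_h(1)-\mathbb{E}[f_h(K^\star)] = \alpha f_h(1)-\mathbb{E}[f_h(K^\star)I(K^\star\geq2)]$ (which uses $\mathbb{P}(K^\star=1)=1-\alpha$) and applying \eqref{eq:LogSteinFactor} twice gives $|f_h(1)-\mathbb{E}[f_h(K^\star)]|\leq 2\alpha C_0$. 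The extra factor of $\alpha$ here is precisely what produces the $\mathbb{P}(K=2)$ correction in the stated bound.

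Putting the pieces together yields the estimate $2C_0\mathbb{E}[K]\bigl(\alpha\mathbb{P}(K^\star=2)+\mathbb{P}(K^\star\geq3)\bigr)$ on $|\mathbb{E}[Kf_h(K-1)-\alpha Kf_h(K)]|$. Unwinding via $\mathbb{E}[K]\mathbb{P}(K^\star=k)=k\mathbb{P}(K=k)$ and $\mathbb{P}(K=1)=(1-\alpha)\mathbb{E}[K]$ rearranges this to $2\alpha C_0\mathbb{E}[K] - 4(1-\alpha)C_0\mathbb{P}(K=2)$, which is exactly the claimed bound. The main obstacle is recognising the one-step centring: a naive term-by-term bound using only $|f_h|\leq C_0$ recovers the leading term $-2\log(1-\alpha)\mathbb{E}[K]$ but cannot see the $\mathbb{P}(K=2)$ savings, which depend on combining the two pieces into one expectation before applying the Stein factor.
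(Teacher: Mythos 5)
Your proof is correct: each step checks out, and after simplifying $\alpha\mathbb{P}(K^\star=2)+\mathbb{P}(K^\star\geq 3)=\alpha-(1-\alpha)\mathbb{P}(K^\star=2)$ and using $\mathbb{E}[K]\mathbb{P}(K^\star=2)=2\mathbb{P}(K=2)$ you land on exactly the stated bound. The skeleton is the same as the paper's — the Stein identity \eqref{eq:LogTV}, size-biasing via \eqref{eq:sb_def}, and the uniform bound \eqref{eq:LogSteinFactor} on $f_h$ — but the final step is carried out differently. The paper introduces an independent copy $\widetilde{K}^\star$ and a Bernoulli variable $I_\alpha$ realised as $I(\widetilde{K}^\star>1)$, bounds the error by $-\tfrac{2\log(1-\alpha)}{\alpha}\,\mathbb{E}[K]\,\mathbb{P}(\widetilde{K}^\star-1\neq I_\alpha K^\star)$ as in \eqref{eq:LogPf1}, and extracts the $\mathbb{P}(K=2)$ correction by retaining the matching event $\{\widetilde{K}^\star=2,\,K^\star=1\}$ in \eqref{eq:kproof}; you instead avoid any coupling, centring the two expectations into one and splitting on $\{K^\star=2\}$ versus $\{K^\star\geq 3\}$, with the sharper estimate $|f_h(1)-\mathbb{E}[f_h(K^\star)]|\leq -2\log(1-\alpha)$ on $\{K^\star=2\}$ playing the role of the paper's matching event. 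The two mechanisms encode the same cancellation and give identical constants, so neither is sharper here; the paper's coupling formulation has the advantage that refinements are transparent (one can add events $\{\widetilde{K}^\star=j+1,\,K^\star=j\}$ for $j>1$, as discussed in Remark \ref{rem:small_alpha}), while your analytic version is arguably more elementary, dispensing with the auxiliary independent copy and making explicit exactly where the $\mathbb{P}(K=2)$ saving comes from.
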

\begin{proof}
We again use \eqref{eq:LogTV} and thus need to bound 
\[
\mathbb{E}[Kf_h(K-1)-\alpha Kf_h(K)]=\mathbb{E}[K]\mathbb{E}[f_h(\widetilde{K}^\star-1)-f_h(I_\alpha K^\star)]\,,
\]
where, as above, $I_\alpha$ is a Bernoulli random variable with mean $\alpha$ independent of $K^\star$, and $\widetilde{K}^\star$ is an independent copy of $K^\star$. Using the bound \eqref{eq:LogSteinFactor}, for $h\in\mathcal{H}$ we may then write
\[
|\mathbb{E}[f_h(\widetilde{K}^\star-1)-f_h(I_\alpha K^\star)]|\leq-\frac{2\log(1-\alpha)}{\alpha}\mathbb{P}(\widetilde{K}^\star-1\not=I_\alpha K^\star)
\]
for any coupling of $\widetilde{K}^\star$ and $I_\alpha K^\star$, so that
\begin{equation}\label{eq:LogPf1}
d_\text{TV}(K,L)\leq-\frac{2\log(1-\alpha)}{\alpha}\mathbb{E}[K]\mathbb{P}(\widetilde{K}^\star-1\not=I_\alpha K^\star)\,.
\end{equation}
Recalling that $\alpha=\mathbb{P}(\widetilde{K}^\star>1)$, we may construct $I_\alpha$ as the indicator of the event that $\widetilde{K}^\star>1$. With this choice,
\[
\mathbb{P}(\widetilde{K}^\star-1\not=I_\alpha K^\star)=\alpha\mathbb{P}(\widetilde{K}^\star-1\not=K^\star|\widetilde{K}^\star>1)
\]
and 
\begin{align}
\nonumber\mathbb{P}(\widetilde{K}^\star-1=K^\star|\widetilde{K}^\star>1)&\geq\mathbb{P}(\widetilde{K}^\star=2,K^\star=1|\widetilde{K}^\star>1)\\
\nonumber&=\mathbb{P}(\widetilde{K}^\star=2|\widetilde{K}^\star>1)\mathbb{P}(K^\star=1)\\
\label{eq:kproof}&=\frac{1-\alpha}{\alpha}\mathbb{P}(K^\star=2)
=\frac{2(1-\alpha)}{\alpha\mathbb{E}[K]}\mathbb{P}(K=2)\,,
\end{align}
so that 
\begin{equation}\label{eq:LogPf2}
\mathbb{P}(\widetilde{K}^\star-1\not=I_\alpha K^\star)\leq\alpha\left(1-\frac{2(1-\alpha)}{\alpha\mathbb{E}[K]}\mathbb{P}(K=2)\right)\,.
\end{equation}
Our result then follows on combining \eqref{eq:LogPf1} and \eqref{eq:LogPf2}.
\end{proof}
\begin{remark}\label{rem:small_alpha}
When bounding the probability $\mathbb{P}(\widetilde{K}^\star-1=K^\star|\widetilde{K}^\star>1)$ in \eqref{eq:kproof} we could, of course, include further terms of the form $\mathbb{P}(\widetilde{K}^\star=j+1,K^\star=j|\widetilde{K}^\star>1)$ for $j>1$ to get a more precise lower bound. However, in our coupling construction we have in mind the setting in which $\alpha$ is small, so that both $\widetilde{K}^\star-1$ and $I_\alpha K^\star$ are equal to zero with high probability. In this setting, further terms of this sort for $j>1$ are significantly smaller than the $j=1$ term we have included in the bound \eqref{eq:kproof}, so add complexity to the resulting bound on total variation distance for no significant gain in accuracy. Related to this, we note that that even if $K$ is logarithmically distributed, the upper bound of Theorem \ref{thm:log} is not zero. Using $\mathbb{E}[L]=\frac{-\alpha}{(1-\alpha)\log(1-\alpha)}$ and $\mathbb{P}(L=2)=\frac{-\alpha^2}{2\log(1-\alpha)}$, the upper bound of Theorem \ref{thm:log} with $K=L$ is equal to 
\begin{equation}\label{eq:rem1}
2\alpha\left(\frac{1}{1-\alpha}-(1-\alpha)\right)\,,
\end{equation}
which is increasing in $\alpha$.
\end{remark}

\section{Negative binomial approximation for mixed binomials}\label{sec:negbin}

Our main aim in this section is to prove Theorems \ref{thm:discrete}(b) and \ref{thm:continuous}. To do this, we begin by considering the more general setting of negative binomial approximation for mixed binomial random variables. We let $W$ have a mixed binomial distribution. In particular, for some $n\in\{1,2,\ldots\}$, $\ell\in\{1,\ldots,n-1\}$ and for some random variable $Q$ supported on $[0,1]$, we let the conditional distribution $W|Q\sim\text{Bin}(n-\ell,Q)$ have a binomial distribution with parameters $n-\ell$ and $Q$. We then write that $W\sim\text{MBin}(n-\ell,Q)$. We consider the approximation of $W$ by the negative binomial random variable $Z\sim\text{NB}(\ell,1-\beta)$, where we choose $\beta=\frac{\mathbb{E}[W]}{\mathbb{E}[W]+\ell}$ so that $\mathbb{E}[Z]=\mathbb{E}[W]$. Our main result here is the following.
\begin{theorem}\label{thm:negbin}
Let $W|Q\sim\text{Bin}(n-\ell,Q)$ and $Z\sim\text{NB}(\ell,1-\beta)$ be as above, with $\beta=\frac{\mathbb{E}[W]}{\mathbb{E}[W]+\ell}$. Then
\[
d_\text{TV}(W,Z)
\leq\frac{1-(1-\beta)^\ell}{\beta\ell}\mathbb{E}[W]\left(\beta+(1-\beta)\left[(n-\ell-1)\frac{\mathbb{E}[Q^2]}{\mathbb{E}[Q]}-(n-\ell-2)\mathbb{E}[Q]\right]\right)\,.
\]
\end{theorem}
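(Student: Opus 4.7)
The plan is to apply Stein's method for the negative binomial distribution, coupled with a size-bias construction that exploits the mixed-binomial structure of $W$. The starting point is the characterising identity $\beta\mathbb{E}[(\ell+Z)g(Z+1)] = \mathbb{E}[Zg(Z)]$ for $Z\sim\text{NB}(\ell,1-\beta)$ already recalled in Section~\ref{sec:LogVsNB}. This motivates the Stein equation $h(k)-\mathbb{E}[h(Z)] = \beta(\ell+k)g_h(k+1) - kg_h(k)$; following the standard Brown--Phillips analysis, for $h\in\mathcal{H}$ the solution satisfies the first-difference bound $\|\Delta g_h\|\leq(1-(1-\beta)^\ell)/(\beta\ell)$, where $\Delta g(k)=g(k+1)-g(k)$. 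Writing $d_\text{TV}(W,Z) = \sup_{h\in\mathcal{H}}|\mathbb{E}[\beta(\ell+W)g_h(W+1) - Wg_h(W)]|$ reduces everything to controlling this last expectation.

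The next step is to bring size-biasing into the evaluation of the Stein equation. Using $\mathbb{E}[Wg_h(W)] = \mathbb{E}[W]\mathbb{E}[g_h(W^\star)]$ together with the identity $\beta\ell = (1-\beta)\mathbb{E}[W]$, short algebra yields
\begin{equation*}
\mathbb{E}[\beta(\ell+W)g_h(W+1) - Wg_h(W)] = \mathbb{E}[W]\bigl\{(1-\beta)\mathbb{E}[g_h(W+1) - g_h(W^\star)] + \beta\mathbb{E}[\Delta g_h(W^\star)]\bigr\}\,.
\end{equation*}
The second term is immediately bounded by $\beta\|\Delta g_h\|$, while for the first I would use $|\mathbb{E}[g_h(W+1)-g_h(W^\star)]|\leq\|\Delta g_h\|\,\mathbb{E}|W+1-W^\star|$ under any joint construction of $(W,W^\star)$. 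This reduces the problem to producing a coupling with small $L^1$ gap.

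For the coupling, a calculation from \eqref{eq:sb_def} shows that the mixed-binomial structure forces $W^\star \stackrel{d}{=} 1 + \widetilde W$, where $\widetilde W\mid Q^\star \sim \text{Bin}(n-\ell-1,Q^\star)$ and $Q^\star$ is the size-bias of $Q$. Since $\mathbb{P}(Q^\star > t)=\mathbb{E}[Q I(Q>t)]/\mathbb{E}[Q]\geq\mathbb{P}(Q>t)$ by the comonotonicity of $Q$ and $I(Q>t)$, the variable $Q^\star$ stochastically dominates $Q$, so Strassen's theorem permits a joint realisation with $Q^\star\geq Q$ almost surely. Conditional on such a pair, I would use independent uniforms $U_1,\ldots,U_{n-\ell}$ to set $X_i=I(U_i\leq Q)$ and $Y_i=I(U_i\leq Q^\star)$, and define $W=\sum_{i=1}^{n-\ell}X_i$ together with $W^\star = 1+\sum_{i=1}^{n-\ell-1}Y_i$. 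Then $Y_i\geq X_i$ almost surely, so the triangle inequality gives
\begin{equation*}
\mathbb{E}|W+1-W^\star| \leq \mathbb{E}[X_{n-\ell}] + \sum_{i=1}^{n-\ell-1}\mathbb{E}[Y_i - X_i] = \mathbb{E}[Q] + (n-\ell-1)\left(\frac{\mathbb{E}[Q^2]}{\mathbb{E}[Q]}-\mathbb{E}[Q]\right)\,,
\end{equation*}
which rearranges to the bracketed quantity $(n-\ell-1)\mathbb{E}[Q^2]/\mathbb{E}[Q] - (n-\ell-2)\mathbb{E}[Q]$ appearing in the theorem.

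Assembling the three estimates produces the claimed bound. The main obstacle I expect is giving a clean, self-contained derivation of the Stein-factor bound $\|\Delta g_h\|\leq(1-(1-\beta)^\ell)/(\beta\ell)$, which requires a careful telescoping argument on the explicit series representation of $g_h$ in the same spirit as the logarithmic bound \eqref{eq:LogSteinFactor}; everything else is short algebra once the size-bias representation of $W^\star$ and the monotone $(Q,Q^\star)$ coupling are in hand.
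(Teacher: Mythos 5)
Your proposal is correct and yields exactly the bound of Theorem \ref{thm:negbin}; every step checks out: the identity $\beta\ell=(1-\beta)\mathbb{E}[W]$, the decomposition
\[
\mathbb{E}[\beta(\ell+W)g_h(W+1)-Wg_h(W)]=\mathbb{E}[W]\bigl\{(1-\beta)\mathbb{E}[g_h(W+1)-g_h(W^\star)]+\beta\mathbb{E}[\Delta g_h(W^\star)]\bigr\}\,,
\]
the size-bias representation $W^\star\stackrel{d}{=}1+\widetilde{W}$ with $\widetilde{W}\mid Q^\star\sim\text{Bin}(n-\ell-1,Q^\star)$, the stochastic dominance $Q^\star\geq_{\text{st}}Q$ via Chebyshev's association inequality, and the shared-uniform monotone coupling giving $\mathbb{E}|W+1-W^\star|\leq(n-\ell-1)\mathbb{E}[Q^2]/\mathbb{E}[Q]-(n-\ell-2)\mathbb{E}[Q]$. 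The overall strategy (Brown--Phillips Stein equation, size-biasing, a coupling exploiting $Q^\star\geq_{\text{st}}Q$) is the same as the paper's, but your intermediate decomposition is genuinely different and somewhat leaner: the paper instead rewrites $\beta\mathbb{E}[(\ell+W)g_h(W+1)]$ using the size-biased variable $(W+1)^\star$, builds an auxiliary mixture $W^\prime=1+I_\gamma I_{\beta/\gamma}+B+C$ involving two Bernoulli indicators, and then compares $W^\prime$ with $W^\star=1+A$ using closure of the usual stochastic order under mixtures and the stochastic monotonicity of $\text{Bin}(n,q)$ in $q$. Your route dispenses with $(W+1)^\star$ and $W^\prime$ entirely, absorbing the corresponding contribution into the trivially bounded term $\beta\mathbb{E}[\Delta g_h(W^\star)]$, at the cost of no loss: the two bounds coincide term by term. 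One practical remark: the step you flag as the main remaining obstacle, the Stein-factor bound $\|\Delta g_h\|\leq(1-(1-\beta)^\ell)/(\beta\ell)$, need not be rederived; it is exactly Lemma 5 of \cite{brown99}, which is what the paper cites at \eqref{eq:NegBinStinFac}, so your argument is complete once that citation is invoked.
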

\begin{proof}
We use the framework of Stein's method for negative binomial approximation established by Brown and Phillips \cite{brown99}; see also \cite{ross13} for more recent developments. To that end, with $h\in\mathcal{H}$ as defined in \eqref{eq:TVset}, we let $g_h:\mathbb{Z}^+\to\mathbb{R}$ satisfy $g_h(0)=0$ and
\[
h(k)=\beta(\ell+k)g_h(k+1)-kg_h(k)\,,
\]
for $k\in\mathbb{Z}^+$, so that we may write
\begin{equation}\label{eq:NegBinPf1}
d_\text{TV}(W,Z)=\sup_{h\in\mathcal{H}}|\mathbb{E}[\beta(\ell+W)g_h(W+1)-Wg_h(W)]|\,.
\end{equation}
From Lemma 5 of \cite{brown99} we have that
\begin{equation}\label{eq:NegBinStinFac}
\sup_{h\in\mathcal{H}}\sup_{k\in\mathbb{Z}^+}|\Delta g_h(k)|\leq\frac{1-(1-\beta)^\ell}{\beta\ell}\,,
\end{equation}
where $\Delta g_h(k)=g_h(k+1)-g_h(k)$.

Now, with our choice of $\beta$ we have $\mathbb{E}[W]=\frac{\beta\ell}{1-\beta}$ and
\begin{align*}
\frac{\beta\mathbb{E}[(\ell+W)g_h(W+1)]}{\mathbb{E}[W]}&=\frac{(1-\beta)(\ell-1)}{\ell}\mathbb{E}[g_h(W+1)]\\
&\qquad\qquad\qquad+\left(1-\frac{(1-\beta)(\ell-1)}{\ell}\right)\frac{\mathbb{E}[(W+1)g_h(W+1)]}{\mathbb{E}[W]+1}\\
&=\frac{(1-\beta)(\ell-1)}{\ell}\mathbb{E}[g_h(W+1)]\\
&\qquad\qquad\qquad+\left(1-\frac{(1-\beta)(\ell-1)}{\ell}\right)\mathbb{E}[g_h((W+1)^\star)]\,,
\end{align*}
where the final equality uses the definition \eqref{eq:sb_def} of size-biasing. Hence,
\begin{align*}
\mathbb{E}[\beta(\ell+W)g_h(W+1)]&=\mathbb{E}[W]\left\{(1-\gamma)\mathbb{E}[g_h(W+1)]+\gamma\mathbb{E}[g_h((W+1)^\star)]\right\}\\
&=\mathbb{E}[W]\mathbb{E}\left[g_h((1-I_\gamma)(W+1)+I_\gamma (W+1)^\star)\right]\,,
\end{align*}
where $\gamma=1-\frac{(1-\beta)(\ell-1)}{\ell}$ and $I_\gamma$ is a Bernoulli random variable with mean $\gamma$ independent of all other random variables with which it appears. Again using the definition of size-biasing, we may therefore write
\begin{equation}\label{eq:NegBinPf2}
\mathbb{E}[\beta(\ell+W)g_h(W+1)-Wg_h(W)]=\mathbb{E}[W]\mathbb{E}[g_h(W^\prime)-g_h(W^\star)]\,,
\end{equation}
where $W^\prime$ is equal in distribution to $(1-I_\gamma)(W+1)+I_\gamma(W+1)^\star$.

It is well-known that we may size-bias a sum of independent random variables by selecting one of these random variables with probabilities proportional to their means, and replacing the chosen random variable with its size-biased version; see Section 2.4 of \cite{arratia19}. Combining this with Lemma 2.4 of \cite{arratia19} on size-biasing a mixture, we have that $W^\star=1+A$, where $A\sim\text{MBin}(n-\ell-1,Q^\star)$ and a mixed binomial distribution with first parameter equal to zero should be interpreted as a point mass at zero. Similarly, and noting that $\frac{\mathbb{E}[W]}{\mathbb{E}[W]+1}=\frac{\beta}{\gamma}$, we may write
\[
(W+1)^\star=I_{\beta/\gamma}(W^\star+1)+(1-I_{\beta/\gamma})(W+1)=1+I_{\beta/\gamma}W^\star+(1-I_{\beta/\gamma})W\,,
\]
where $I_{\beta/\gamma}$ is a Bernoulli random variable with mean $\beta/\gamma$ which is independent of all else. It then follows that we may write
\[
W^\prime=1+\left[1-I_\gamma I_{\beta/\gamma}\right]W+I_\gamma I_{\beta/\gamma}W^\star=1+I_\gamma I_{\beta/\gamma}+B+C\,,\
\]
where $B\sim\text{MBin}(n-\ell-1,Q^+)$,
\[
Q^+=\left(1-I_\gamma I_{\beta/\gamma}\right)Q+I_\gamma I_{\beta/\gamma}Q^\star\,,
\]
and $C\sim\text{MBin}(1,(1-I_\gamma I_{\beta/\gamma})Q)$, and where, conditional on $Q$, $Q^\star$ and the Bernoulli random variables $I_\gamma$ and $I_{\beta/\gamma}$, $B$ and $C$ are independent. Since $Q^\star$ is stochastically larger than $Q$, the closure of the usual stochastic order under mixtures (see Theorem 1.A.3(d) of \cite{shaked07}) implies that $Q^\star$ is stochastically larger than $Q^+$. Since the binomial distribution $\text{Bin}(n,q)$ is stochastically increasing in $q$ (see Example 1.A.25 of \cite{shaked07}), and again using closure of stochastic ordering under mixtures, we then have that $A$ is stochastically larger than $B$. Hence, for $h\in\mathcal{H}$, the bound \eqref{eq:NegBinStinFac} gives
\begin{align}
\nonumber|\mathbb{E}[g_h(W^\prime)-g_h(W^\star)]|&\leq\frac{1-(1-\beta)^\ell}{\beta\ell}\mathbb{E}|W^\prime-W^\star|\leq\frac{1-(1-\beta)^\ell}{\beta\ell}\left(\beta+\mathbb{E}[A-B+C]\right)\\
\nonumber&=\frac{1-(1-\beta)^\ell}{\beta\ell}\left(\beta+(n-\ell-1)\mathbb{E}[Q^\star-Q^+]+(1-\beta)\mathbb{E}[Q]\right)\\
\nonumber&=\frac{1-(1-\beta)^\ell}{\beta\ell}\left(\beta+(1-\beta)\left\{(n-\ell-1)\mathbb{E}[Q^\star-Q]+\mathbb{E}[Q]\right\}\right)\\
\label{eq:NegBinPf3}&=\frac{1-(1-\beta)^\ell}{\beta\ell}(\beta+(1-\beta)\{(n-\ell-1)\mathbb{E}[Q^\star]-(n-\ell-2)\mathbb{E}[Q]\})\,.
\end{align}
From the definition \eqref{eq:sb_def} we have that $\mathbb{E}[Q^\star]=\frac{\mathbb{E}[Q^2]}{\mathbb{E}[Q]}$. The desired result then follows by combining this with the representation \eqref{eq:NegBinPf1} and \eqref{eq:NegBinPf2}--\eqref{eq:NegBinPf3}.
\end{proof}

\subsection{Proof of Theorem \ref{thm:discrete}(b)}

We now apply Theorem \ref{thm:negbin} to establish the upper bound in Theorem \ref{thm:discrete}(b). To that end, we will first transform the problem of logarithmic approximation for a positive, integer-valued random variable $K$ to the problem of geometric approximation for the size-biased version $K^\star$, as defined in \eqref{eq:sb_def}.
\begin{lemma}\label{lem:geom}
Let $K$ be a positive, integer-valued random variable with finite mean, $L\sim\text{L}(\beta)$ for some $\beta\in(0,1)$, and $G\sim\text{Geom}(1-\beta)$ have a geometric distribution with parameter $1-\beta$. Then
\[
d_\text{TV}(K,L)\leq-\frac{2(1+\beta)\log(1-\beta)}{\beta}\mathbb{E}[K]d_\text{TV}(K^\star,G)\,.
\]
\end{lemma}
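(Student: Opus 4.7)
The plan is to transfer the problem via size-biasing and exploit the fact that the size-biased logarithmic distribution is exactly geometric, so that the Stein identity for $L$ becomes a triviality for $G$.

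First I would use the characterisation \eqref{eq:LogTV} with parameter $\beta$ in place of $\alpha$ and size-bias both expectations, writing
\[
d_\text{TV}(K,L)=\sup_{h\in\mathcal{H}}\left|\mathbb{E}[Kf_h(K-1)-\beta Kf_h(K)]\right|
=\mathbb{E}[K]\sup_{h\in\mathcal{H}}\left|\mathbb{E}[f_h(K^\star-1)]-\beta\mathbb{E}[f_h(K^\star)]\right|\,,
\]
using the definition \eqref{eq:sb_def} of $K^\star$. Next I would verify by direct computation that $L^\star\sim\text{Geom}(1-\beta)$: from $\mathbb{P}(L=k)=-\alpha^k/(k\log(1-\alpha))$ with $\alpha=\beta$ and $\mathbb{E}[L]=-\beta/((1-\beta)\log(1-\beta))$ we get $\mathbb{P}(L^\star=k)=(1-\beta)\beta^{k-1}$ for $k\geq 1$, so $L^\star\stackrel{d}{=}G$. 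Consequently, the key identity derived in the section's preamble, namely $\mathbb{E}[f_h(L^\star-1)]-\beta\mathbb{E}[f_h(L^\star)]=(1-\beta)f_h(0)=0$ (since $f_h(0)=0$), translates into
\[
\mathbb{E}[f_h(G-1)]=\beta\mathbb{E}[f_h(G)]\,.
\]

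Subtracting this zero quantity inside the supremum and applying the triangle inequality gives
\[
\left|\mathbb{E}[f_h(K^\star-1)]-\beta\mathbb{E}[f_h(K^\star)]\right|
\leq\left|\mathbb{E}[f_h(K^\star-1)-f_h(G-1)]\right|+\beta\left|\mathbb{E}[f_h(K^\star)-f_h(G)]\right|\,.
\]
Each difference of expectations is bounded by the elementary inequality $|\mathbb{E}[f_h(X)-f_h(Y)]|\leq 2\|f_h\|_\infty d_\text{TV}(X,Y)$, together with the fact that shifting by a constant preserves total variation, i.e.\ $d_\text{TV}(K^\star-1,G-1)=d_\text{TV}(K^\star,G)$. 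The Stein-factor bound \eqref{eq:LogSteinFactor} yields $\|f_h\|_\infty\leq-\log(1-\beta)/\beta$, and combining everything gives
\[
d_\text{TV}(K,L)\leq\mathbb{E}[K]\cdot 2(1+\beta)\cdot\frac{-\log(1-\beta)}{\beta}\cdot d_\text{TV}(K^\star,G)\,,
\]
which is the claim.

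There is no real obstacle here; the lemma is a clean transfer result, and the only conceptual input is the recognition that $L^\star$ is geometric, which is precisely what makes geometric approximation of $K^\star$ the natural intermediate problem. The proof is almost entirely a bookkeeping exercise combining the Stein identity, size-biasing, and the triangle inequality, with the uniform bound on $f_h$ supplying the logarithmic factor $-\log(1-\beta)/\beta$ and the two-term decomposition contributing the $(1+\beta)$.
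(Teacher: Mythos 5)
Your proof is correct and takes essentially the same route as the paper: pass to the size-biased variable through the logarithmic Stein equation, use the uniform bound \eqref{eq:LogSteinFactor} on $f_h$, and split into two total variation terms to produce the factor $2(1+\beta)$. The only presentational difference is that you insert the exact identity $\mathbb{E}[f_h(G-1)]=\beta\,\mathbb{E}[f_h(G)]$ (via $L^\star\stackrel{d}{=}G$) and apply the smoothing bound $|\mathbb{E}[f_h(X)]-\mathbb{E}[f_h(Y)]|\leq 2\|f_h\|_\infty d_\text{TV}(X,Y)$, whereas the paper reaches the same estimate through a coupling of $K^\star-1$ with $I_\beta K^\star$ and the identity $G-1\stackrel{d}{=}I_\beta G$.
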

\begin{proof}
We use the framework for Stein's method for logarithmic approximation developed in Section \ref{sec:logarithmic}. Arguing as we did for \eqref{eq:LogPf1}, we have that for any coupling of $K^\star-1$ and $I_\beta K^\star$, where $I_\beta$ is a Bernoulli random variable with mean $\beta$ independent of the $K^\star$ it multiplies here,
\[
d_\text{TV}(K,L)\leq-\frac{2\log(1-\beta)}{\beta}\mathbb{E}[K]\mathbb{P}(K^\star-1\not=I_\beta K^\star)\,.
\]
Recalling that we may equivalently define the total variation distance as $d_\text{TV}(K^\star-1,I_\beta K^\star)=\inf\mathbb{P}(K^\star-1\not=I_\beta K^\star)$, where the infimum is taken over all couplings of $K^\star-1$ and $I_\beta K^\star$, we may thus write
\begin{equation}\label{eq:LemPf}
d_\text{TV}(K,L)\leq-\frac{2\log(1-\beta)}{\beta}\mathbb{E}[K]d_\text{TV}(K^\star-1,I_\beta K^\star)\,.
\end{equation}
From the definition of the geometric distribution we can easily check that $G-1$ is equal in distribution to $I_\beta G$. We may thus use the triangle inequality for total variation distance to write
\begin{align*}
d_\text{TV}(K^\star-1,I_\beta K^\star)&\leq d_\text{TV}(K^\star-1,G-1)+d_\text{TV}(I_\beta K^\star,I_\beta G)\\
&=d_\text{TV}(K^\star-1,G-1)+\beta d_\text{TV}(K^\star,G)\\
&=(1+\beta)d_\text{TV}(K^\star,G)\,.
\end{align*}
Combining this with \eqref{eq:LemPf} yields the desired result.
\end{proof}
To establish Theorem \ref{thm:discrete}(b) we now take $K=K_n$ in the setting of Lemma \ref{lem:geom}. With notation as in that latter result, we note that $G-1\sim\text{NB}(1,1-\beta)$ and so we may apply Theorem \ref{thm:negbin} with $\ell=1$ to bound $d_\text{TV}(K_n^\star,G)$ once we have shown that $K_n^\star-1$ is a mixed binomial random variable. To that end, we give the following explicit construction of $K_n^\star-1$: 
\begin{itemize}
\item Sample the random variable $M$ according to the distribution 
\[
\mathbb{P}(M=m)=\frac{\mathbb{P}(X=m)\mathbb{P}(X\leq m)^{n-1}}{\sum_{j=1}^\infty\mathbb{P}(X=j)\mathbb{P}(X\leq j)^{n-1}}\,,
\]
for $m=1,2,\ldots$.
\item Set $X_n=M$, and sample $X_1^\prime,\ldots,X_{n-1}^\prime$ independently, each according to the distribution
\[
\mathbb{P}(X_1^\prime=k|M=m)=\frac{\mathbb{P}(X=k)}{\mathbb{P}(X\leq m)}\,,
\]
for $k=1,\ldots,m$.
\item Set $K_n^\star-1=|\left\{i\in\{1,\ldots,n-1\}:X_i^\prime=M\right\}|$\,.
\end{itemize}
Using an argument analogous to that of Lemma 2.1 of Brands \emph{et al$.$} \cite{brands94}, we then have that
\[
\mathbb{P}(K_n^\star=k|M=m)=\binom{n-1}{k-1}\frac{\mathbb{P}(X=m)^{k-1}\mathbb{P}(X\leq m-1)^{n-k}}{\mathbb{P}(X\leq m)^{n-1}}\,,
\]
so that 
\begin{equation}\label{eq:MixBinSB}
K_n^\star-1|M\sim\text{Bin}(n-1,q(M))\,,
\end{equation}
where $q(m)=\frac{\mathbb{P}(X=m)}{\mathbb{P}(X\leq m)}$. We will return to this mixed binomial representation of $K_n^\star-1$ in the proof of Theorem \ref{thm:poisson} in Section \ref{sec:poisson} below. Removing the conditioning we further have that
\begin{align*}
\mathbb{P}(K_n^\star=k)&=\sum_{m=1}^\infty\mathbb{P}(M=m)\mathbb{P}(K_n^\star=k|M=m)\\
&=\binom{n-1}{k-1}\frac{\sum_{m=1}^\infty\mathbb{P}(X=m)^k\mathbb{P}(X\leq m-1)^{n-k}}{\sum_{j=1}^\infty\mathbb{P}(X=j)\mathbb{P}(X\leq j)^{n-1}}
=\frac{k\mathbb{P}(K_n=k)}{\mathbb{E}[K_n]}\,,
\end{align*}
using the mass function and first moment given in \eqref{eq:kmass} and \eqref{eq:kmean}, respectively, for the final inequality, confirming that $K_n^\star$ is indeed a size-biased version of $K_n$. 

With $K_n^\star-1|M\sim\text{Bin}(n-1,q(M))$, and noting that $d_\text{TV}(K_n^\star,G)=d_\text{TV}(K_n^\star-1,G-1)$, Theorem \ref{thm:negbin} then gives us that 
\[
d_\text{TV}(K_n^\star,G)
\leq\mathbb{E}[K_n^\star-1]\left(\beta+(1-\beta)\left[(n-2)\frac{\mathbb{E}[q(M)^2]}{\mathbb{E}[q(M)]}-(n-3)\mathbb{E}[q(M)]\right]\right)\,,
\]
where we note that the choice $1-\beta=\frac{1}{\mathbb{E}[K_n^\star]}=\frac{\mathbb{E}[K_n]}{\mathbb{E}[K_n^2]}$ in Theorem \ref{thm:negbin} matches that in Theorem \ref{thm:discrete}(b). Combining this with Lemma \ref{lem:geom} we then have that
\begin{multline*}
d_\text{TV}(K,L)
\leq-\frac{2(1+\beta)\log(1-\beta)}{\beta}\mathbb{E}[K_n]\mathbb{E}[K_n^\star-1]\\
\times\left(\beta+(1-\beta)\left[(n-2)\frac{\mathbb{E}[q(M)^2]}{\mathbb{E}[q(M)]}-(n-3)\mathbb{E}[q(M)]\right]\right)\,.
\end{multline*}
The proof of Theorem \ref{thm:discrete}(b) is completed upon using the choice of $\beta$ to note that 
\[
\frac{1}{\beta}\mathbb{E}[K_n]\mathbb{E}[K_n^*-1]=\mathbb{E}[K_n]\mathbb{E}[K_n^\star]=\mathbb{E}[K_n^2]\,,
\]
and using \eqref{eq:kmean} to note that
\begin{align}
\nonumber\mathbb{E}[q(M)]&=\frac{\sum_{m=1}^\infty\mathbb{P}(X=m)^2\mathbb{P}(X\leq m)^{n-2}}{\sum_{j=1}^\infty\mathbb{P}(X=j)\mathbb{P}(X\leq j)^{n-1}}=\frac{\mathbb{E}[(K_n)_2]}{(n-1)\mathbb{E}[K_n]}\,,\text{ and }\\
\label{eq:QMoments}\mathbb{E}[q(M)^2]&=\frac{\sum_{m=1}^\infty\mathbb{P}(X=m)^3\mathbb{P}(X\leq m)^{n-3}}{\sum_{j=1}^\infty\mathbb{P}(X=j)\mathbb{P}(X\leq j)^{n-1}}=\frac{\mathbb{E}[(K_n)_3]}{(n-1)(n-2)\mathbb{E}[K_n]}\,.
\end{align}
\begin{remark}\label{rem:thm1}
By considering how the upper bound of Theorem \ref{thm:discrete}(b) behaves should the first three moments of $K_n$ match those of a logarithmic distribution, we would expect this to be generally inferior to Theorem \ref{thm:discrete}(a). Using the fact that
\[
\mathbb{E}[(L)_\ell]=\frac{-(\ell-1)!}{\log(1-\alpha)}\left(\frac{\alpha}{1-\alpha}\right)^\ell
\]
for $L\sim\text{L}(\alpha)$ and $\ell=1,2,\ldots$, with $\beta=\alpha$ and the factorial moments of $K_n$ replaced by those of $L$, the upper bound of Theorem \ref{thm:discrete}(b) becomes
\[
\frac{2\alpha(1+\alpha)}{(1-\alpha)^2}\left(\alpha+(1-\alpha)\left[\frac{2\alpha}{1-\alpha}-\frac{(n-3)\alpha}{(n-1)(1-\alpha)}\right]\right)\,,
\]
which for large $n$ is approximately equal to $\frac{4\alpha^2(1+\alpha)}{(1-\alpha)^2}$. For all $\alpha\in(0,1)$ this is greater than \eqref{eq:rem1}, which gives a comparable expression for the bound of Theorem \ref{thm:discrete}(a). Although we typically expect that part (a) of Theorem \ref{thm:discrete} is superior to part (b), we nevertheless include both since we have been unable to show that this is the case for all possible choices of the underlying distribution of the $X_i$. 
\end{remark}

\subsection{Proof of Theorem \ref{thm:continuous}}\label{sec:pf_cont}

Finally in this section, we use Theorem \ref{thm:negbin} to establish Theorem \ref{thm:continuous}. By Lemma 1 of Pakes and Li \cite{pakes98}, we have that $K_n(a,\ell)-1\sim\text{MBin}(n-\ell,r_a(X_{n-\ell+1:n}))$, where $r_a(x)=1-\frac{F(x-a)}{F(x)}$ for $x\in\mathbb{R}$. We note that $X_{n-\ell+1:n}$ has distribution function
\[
\mathbb{P}(X_{n-\ell+1:n}\leq x)=n\binom{n-1}{\ell-1}\int_{-\infty}^x(1-F(y))^{\ell-1}F(y)^{n-\ell}f(y)\,\text{d}y
\]
and density function
\[
f_\ell(x)=n\binom{n-1}{\ell-1}(1-F(x))^{\ell-1}F(x)^{n-\ell}f(x)\,,
\]
for $x\in\mathbb{R}$, from which it follows that 
\[
\mathbb{E}[r_a(X_{n-\ell+1:n})^j]
=n\binom{n-1}{\ell-1}\int_{-\infty}^\infty(1-F(x))^{\ell-1}F(x)^{n-\ell}\left(1-\frac{F(x-a)}{F(x)}\right)^jf(x)\,\text{d}x=M_j\,,
\]
for $j=1,2$.
Noting also that the choice of the parameter $\beta$ in Theorem \ref{thm:negbin} matches that in Theorem \ref{thm:continuous}, Theorem \ref{thm:negbin} thus gives us that
\begin{multline*}
d_\text{TV}(K_n(a,\ell)-1,Z)\\
\leq\frac{1-(1-\beta)^\ell}{\beta\ell}\mathbb{E}[K_n(a,\ell)-1]\left(\beta
+(1-\beta)\left[(n-\ell-1)\frac{M_2}{M_1}-(n-\ell-2)M_1\right]\right)\,,
\end{multline*}
as required.

\section{Poisson approximation for $K_n$ in the discrete case}\label{sec:poisson}

In this section we establish the Poisson approximation upper bound stated in Theorem \ref{thm:poisson}. Letting $Y\sim\text{Pois}(\lambda)$ as in the statement of the theorem, we use the triangle inequality to write
\begin{equation}\label{eq:triangle}
d_\text{TV}(K_n,Y)\leq d_\text{TV}(K_n,K_n^\star)+d_{TV}(Y,Y+1)+d_\text{TV}(K_n^\star-1,Y)\,.
\end{equation}
Using equation (5) of \cite{daly11}, for the first term on the right-hand side of \eqref{eq:triangle} we have
\begin{equation}\label{eq:triangle2}
d_\text{TV}(K_n,K_n^\star)\leq\frac{\sqrt{\text{Var}(K_n)}}{2\mathbb{E}[K_n]}=\frac{\sqrt{\mathbb{E}[(K_n)_2]-\mathbb{E}[K_n](\mathbb{E}[K_n]-1)}}{2\mathbb{E}[K_n]}\,.
\end{equation}
By Proposition 3 of \cite{daly11} we have
\begin{equation}\label{eq:triangle3}
d_{TV}(Y,Y+1)\leq\frac{1}{2\sqrt{\lambda}}=\sqrt{\frac{\mathbb{E}[K_n]}{4\mathbb{E}[(K_n)_2]}}\,.
\end{equation}
It remains only to bound the final term on the right-hand side of \eqref{eq:triangle}. To that end we note the mixed binomial representation \eqref{eq:MixBinSB} of $K_n^\star-1$ used earlier in the proof of Theorem \ref{thm:discrete}. Recalling that the total variation distance between a binomial $\text{Bin}(n,q)$ distribution and a Poisson distribution of the same mean may be bounded by $q$ (see, for example, equation (1.23) of \cite{barbour92}), a conditioning argument using this mixed binomial representation of $K_n^\star-1$ gives us that
\[
d_\text{TV}(K_n^\star-1,Y^\dagger)\leq\mathbb{E}[q(M)]\,,
\]
where $Y^\dagger$ has the mixed Poisson distribution $Y^\dagger|M\sim\text{Pois}((n-1)q(M))$. Noting that our choice of $\lambda$ is such that $\mathbb{E}[Y^\dagger]=\mathbb{E}[Y]=\lambda$, from Theorem 1.C(ii) of \cite{barbour92} we have that
\[
d_\text{TV}(Y^\dagger,Y)\leq\frac{\text{Var}((n-1)q(M))}{\lambda}=\frac{(n-1)^2\text{Var}(q(M))}{\lambda}\,.
\]
Hence,
\begin{align}
\nonumber d_\text{TV}(K_n^\star-1,Y)&\leq\mathbb{E}[q(M)]+\frac{(n-1)^2\text{Var}(q(M))}{\lambda}\\
\label{eq:triangle4}&=\frac{(n-1)\mathbb{E}[(K_n)_3]}{(n-2)\mathbb{E}[(K_n)_2]}-\frac{(n-2)\mathbb{E}[(K_n)_2]}{(n-1)\mathbb{E}[K_n]}\,,
\end{align}
where the final equality follows from the expressions \eqref{eq:QMoments} for the first two moments of $q(M)$. The conclusion of Theorem \ref{thm:poisson} then follows by combining \eqref{eq:triangle}--\eqref{eq:triangle4}.

\section{Concluding remarks}\label{sec:conc}

In this note we have established explicit error bounds in the logarithmic and Poisson approximation of the number of maxima in a sample of independent, discrete data. The setting in which the underlying data are geometrically distributed was our main example here. Similarly, in the case of continuous data, illustrated by the Gumbel and uniform examples, we establish error bounds in negative binomial approximation for the number of data points within a given threshold of one of the order statistics of our sample. 

We conclude by noting scope for further work. We expect that, with a more sophisticated coupling than used here, error bounds and rates of convergence could be strengthened; for example we would expect to be able to obtain an error bound which decays to zero for a fixed $a$ in the setting of Example \ref{eg:gumbel}. This may require allowing both parameters of our approximating negative binomial distribution $Z$ in Theorem \ref{thm:negbin} to depend on $Q$, rather than only the second parameter. In other cases we similarly note that there is scope to find informative error bounds for a greater range of underlying parameter values. For example, in the geometric setting of Example \ref{eg:geom1} it remains an open problem to find reasonable error bounds for larger values of the parameter $p$. Our geometric and Gumbel examples also illustrate the scope to improve the tightness of our upper bounds.

It would be of interest to extend our findings beyond the case of independent data. We may then no longer have explicit expressions for the distribution of $K_n$ or the order statistics of our data, and so we would expect proofs to become more complex, but note that an important general advantage Stein's method over other techniques to derive such error bounds is its ability to cope with relaxation of independence assumptions. 

Finally, we note that the development of Stein's method for approximation by a logarithmic distribution was an important step in our work. In future work we hope to be able to make use of these tools in applications beyond the setting of the present paper. 

\subsubsection*{Acknowledgements}
The author thanks two anonymous reviewers and an Editor for their careful readings of an earlier version of this manuscript, and constructive comments and suggestions that improved the paper.

\end{document}